\newcommand{\scal}[2]{\langle #1,#2\rangle}
\newcommand{\rr}[1]{\mathbf R^{#1}}
\newcommand{\nn}[1]{\mathbf N^{#1}}
\newcommand{\zz}[1]{\mathbf Z^{#1}}
\newcommand{\nm}[2]{\Vert #1\Vert _{#2}}
\newcommand{\op}{\operatorname{Op}}
\newcommand{\sets}[2]{\{ \, #1\, ;\, #2\, \} }
\newcommand{\ep}{\varepsilon}
\newcommand{\fy}{\varphi}
\newcommand{\cdo}{\, \cdot \, }
\newcommand{\vrum}{\vspace{0.1cm}}
\newcommand{\GL}{\mathbf{M}}
\newcommand{\mabfq}{{\boldsymbol q}}
\newcommand{\mabfp}{{\boldsymbol p}}
\newcommand{\maclS}{\mathcal S}
\newcommand{\mascB}{\mathscr B}
\newcommand{\mascD}{\mathscr D}
\newcommand{\mascE}{\mathscr E}
\newcommand{\mascF}{\mathscr F}
\newcommand{\mascP}{\mathscr P}
\newcommand{\mascS}{\mathscr S}
\numberwithin{equation}{section}          
\newtheorem{thm}{Theorem}
\numberwithin{thm}{section}
\newcommand{\rubrik}{}
\newtheorem{prop}[thm]{Proposition}
\newtheorem{cor}[thm]{Corollary}
\newtheorem{lemma}[thm]{Lemma}
\theoremstyle{definition}
\newtheorem{defn}[thm]{Definition}
\newtheorem{example}[thm]{Example}
\theoremstyle{remark}
\newtheorem{rem}[thm]{Remark}              
\author{Joachim Toft}
\address{Department of Mathematics,
Linn{\ae}us University, Sweden}
\email{joachim.toft@lnu.se}
\title{\textbf{Continuity of Gevrey-H{\"o}rmander pseudo-differential
operators on modulation spaces}}
\begin{document}

\subjclass[2010]{35S05, 47B37, 47G30, 42B35}

\keywords{Pseudo-differential operators, Modulation spaces,
BF-spaces, Gelfand-Shilov spaces}

\begin{abstract}
Let $s\ge 1$, $\omega ,\omega _0\in \mascP _{E,s}^0$, $a\in
\Gamma _{s}^{(\omega _0)}$, and let $\mascB$ be a suitable invariant
quasi-Banach function space, Then we prove that
the pseudo-differential operator $\op (a)$ is continuous from
$M(\omega _0\omega ,\mascB )$ to $M(\omega ,\mascB )$.
\end{abstract}

\maketitle

\section{Introduction}\label{sec0}

\par

The main part of the theory of pseudo-differential operators is given in the
framework of classical function and distribution space theory. That is,
the operators acts between topological vector spaces of functions or distributions
which contain $C_0^\infty$, $\mascS$ or $C^\infty$ and are contained in
corresponding duals $\mascD '$, $\mascS '$ or $\mascE '$ (see \cite{Ho1} or
Section \ref{sec1} for notations).

\par

On the other hand, several problems in e.{\,}g. physics, engineering,
partial differential equations, time-frequency analysis and signal processing are
not well-posed in the framework of such spaces. In such situations
the pseudo-differential operators appearing naturally might fail to have symbols in
classical function and distribution spaces. For example,
Euler-Tricomi equation $D^2_{t}f+tD^2_{x}f=0$, useful in the study of
transonic flow, is not well-posed in the classical setting. Similar facts hold true for
the family of Cauchy problems
\begin{equation}\label{Eq:CauchyProbl}
\begin{cases} D^m_t f -x^{m_1}D_x^{m_2}f=0  \qquad (t,x) \in [0,T]\times \mathbf R 
\\[1ex]
f(0,x)=f_0(x) \qquad \qquad \qquad x \in \mathbf R
\\[1ex]
D_t^k f(0,x) =0, \qquad \qquad  \quad k=1,\ldots, m-1.
\end{cases}
\end{equation}
However, by replacing the classical function and distribution spaces with
suitable Gelfand-Shilov or Gevrey spaces and their spaces of
ultra-distributions, the problem \eqref{Eq:CauchyProbl} become well-posed.
(See \cite{Ca1,Po}.)

\par

An other classical example concerns the heat problem
$$
\partial _tf =\Delta _xf ,\qquad f(0,x)=f_0(x),\ t\in \mathbf R,\ x\in \Omega ,
$$
where $\Omega$ is a cuboid. It is well-posed when moving forward in time ($t>0$),
but not well-posed when moving backwards in
time $(t<0)$ within the framework of classical function and distribution spaces.
On the other hand, by \cite[Example 2.16]{ToNa} it follows that the heat problem
is well-posed for suitable Gelfand-Shilov distribution spaces and Gevrey classes
when $t<0$. Furthermore, if $t>0$, then more precise continuity descriptions is deduced
in the framework of such spaces instead of classical function and distribution spaces.

\par

In the paper we consider continuity properties for a class of pseudo-differential
operators introduced in \cite{CaTo} when acting on a broad class of modulation
spaces. The symbols of the pseudo-differential operators are smooth, should
obey strong ultra-regularity of Gevrey or Gelfand-Shilov types, and are allowed
to grow exponentially or subexponentially.

\par

Related questions were considered in the framework of the usual distribution theory
in \cite{To14}, where pseudo-differential operators were considered, with symbols in
$S^{(\omega _0)}$, the set of all smooth $a$ which satisfies
\begin{equation}\label{Eq:SomegaDef}
|\partial ^\alpha a | \le C _\alpha \omega _0 .
\end{equation}
(See \cite{Ho1} and Section \ref{sec1} for notations.) In \cite[Theorem 3.2]{To14}
it was deduced that if $\mascB$ is a translation invariant BF-space,
$\omega$ and $\omega _0$ belong to $\mascP$, i.{\,}e.
moderate and polynomially bounded weights, and $a\in S^{(\omega _0)}$,
then corresponding pseudo-differential operator, $\op (a)$ is continuous
from the modulation space $M(\omega _0\omega ,\mascB )$ to
$M(\omega ,\mascB )$. The obtained result in \cite{To14} can also
be considered as extensions of certain results in the pioneering paper
\cite{Ta} by Tachizawa. For example, for suitable restrictions on
$\omega$, $\omega _0$ and $\mascB$, it follows that \cite[Theorem 3.2]{To14}
covers \cite[Theorem 2.1]{Ta}.

\par

Several classical continuity properties follows from \cite[Theorem 3.2]{To14}. For
example, since $\mascS$ and $\mascS '$ are suitable intersections and
unions, respectively, of modulation spaces at above, it follows that $\op (a)$
is continuous on $\mascS$ and on $\mascS '$ when $a\in S^{(\omega _0)}$
with $\omega _0\in \mascP$.

\par

Some further conditions on the symbols in $S^{(\omega _0)}$ are required if
corresponding pseudo-differential operators should be continuous on
Gelfand-Shilov spaces, because of the imposed Gevrey regularity on the
elements in such spaces. For symbols $a$ in $\Gamma ^{(\omega _0)}_s$
and $\Gamma ^{(\omega _0)}_{0,s}$, the condition
\eqref{Eq:SomegaDef} is replaced by refined Gevrey-type conditions of the form
\begin{equation}\label{Eq:GammaDef}
|\partial ^\alpha a | \le C h^{|\alpha |}\alpha !^s \omega _0,
\end{equation}
involving global constants $C$ and $h$ which are independent of the order of the
derivatives $\alpha$
(cf. \cite{CaTo}). More precisely, $\Gamma ^{(\omega _0)}_s$ consists of all smooth
$a$ such that \eqref{Eq:GammaDef} holds for some constants $C>0$ and $h>0$,
and $a$ belongs to $\Gamma ^{(\omega _0)}_{0,s}$, whenever for every
$h>0$ there is a constant for some $C>0$ (which depends on both $a$ and $h$)
such that \eqref{Eq:GammaDef} holds.
In the case $s\ge 1$, the set $\mascP$ in \cite{To14} of weight functions are 
essentially replaced by the broader classes $\mascP _{E,s}^0$ and $\mascP _{E,s}$ in
\cite{CaTo}. Here $\omega _0 \in \mascP _{E,s}$ whenever $\omega$ is
$v_r$-moderate for some $r>0$, where
\begin{equation}\label{Eq:CondomegaGS}
v_r = e^{r|\cdo |^{\frac 1s}},
\end{equation}
and $\omega _0 \in \mascP _{E,s}^0$ whenever $\omega$ is
$v_r$-moderate for every $r>0$.

\par


\par

In \cite{CaTo} it is proved that if $\omega _0\in \mascP _{E,s}$
and $a\in \Gamma ^{(\omega _0)}_{0,s}$, then corresponding
pseudo-differential operators $\op (a)$ is continuous on the Gelfand-Shilov space
$\Sigma _s$ of Beurling type, and its distribution space $\Sigma _s'$. If instead
$\omega _0\in \mascP _{E,s}^0$ and $a\in \Gamma ^{(\omega _0)}_{s}$, then $\op (a)$
is continuous on the Gelfand-Shilov space $\maclS _s$ of Roumieu type, and its distribution
space $\maclS _s'$. (Cf. Theorems 4.10 and 4.11 in \cite{CaTo}.)

\par

In Section \ref{sec2} we enlarge this family of continuity results by deducing
continuity properties for such pseudo-differential operators when acting on a broad
family of modulation spaces. More precisely, if
$\omega _0,\omega \in \mascP _{E,s}^0$, $\mascB$ is a suitable invariant quasi-Banach-Function
space (QBF-space), $M(\omega ,\mascB)$  is the modulation space with respect to $\omega$
and $\mascB$, and $a\in \Gamma ^{(\omega )}_{s}$,
then we show that $\op (a)$ is continuous from $M(\omega _0\omega ,\mascB )$ to
$M(\omega ,\mascB )$, and that the same holds true with $\mascP _{E,s}$
and $\Gamma ^{(\omega )}_{0,s}$ in place of $\mascP _{E,s}^0$
and $\Gamma ^{(\omega )}_{s}$ (cf. Theorems \ref{p3.2}, \ref{p3.2B} and
\ref{Thm:OpCont3}, and Corollary \ref{Cor:OpCont3}). In the case when
$\mascB$ is a Banach space, then the restrictions on $\mascB$ are given
in Definition \ref{bfspaces1}, while if $\mascB$ fails to be a Banach space, then
suitable Lebesgue quasi-norm estimates are imposed on the elements in $\mascB$.

\par

Evidently, by replacing $\mascP _{E,s}^0$
and $\Gamma ^{(\omega )}_{s}$ with $\mascP$ and $S^{(\omega )}$, our results in Section
\ref{sec2}, when
$\mascB$ is a Banach space,
take the same form as the main result Theorem 3.2 in \cite{To14}. Some of the results
in Section \ref{sec2} can therefore be considered as analogies of the results in
\cite{To14} in the framework of
ultra-distribution theory. We also remark that using the fact that
Gelfand-Shilov spaces and their distribution spaces are equal to suitable intersections and
unions of modulation spaces, the continuity results for pseudo-differential operators 
in \cite{CaTo} are straight-forward consequences of Theorems  \ref{p3.2} and \ref{p3.2B}.
We also refer to \cite{GL,Te1,Te2,PT2,PT3,To18,To24} and the references therein for more 
facts about pseudo-differential operators in framework of Gelfand-Shilov
and modulation spaces.

\par

In Section \ref{sec3} we present some examples on continuity properties for
pseudo-differential operators under considerations. These continuity
properties are straight-forward consequences of the main results, Theorems \ref{p3.2}
and \ref{p3.2B}, from Section \ref{sec2}. Especially we explain continuity
in the framework of Sobolev type spaces and weighted $L^2$ spaces, with
exponential weights, as well as continuity of such operators on
$\Gamma ^{(\omega )}_{0,s}$ spaces.

\medspace

The (classical) modulation spaces $M^{p,q}$, $p,q \in [1,\infty]$, as
introduced by Feichtinger in \cite{Fe4},
consist of all tempered distributions whose
short-time Fourier transforms (STFT) have finite mixed $L^{p,q}$
norm. It follows that the parameters $p$ and $q$ to some extent
quantify the degrees of asymptotic decay and singularity of
the distributions in $M^{p,q}$. The theory of modulation spaces was
developed further and generalized in
\cite{FG1,FG2,FG4,Gc1}, where Feichtinger and Gr{\"o}chenig
established the theory of coorbit spaces. In particular, the modulation
space $M^{p,q}_{(\omega )}$, where $\omega$ denotes a
weight function on phase (or time-frequency shift) space, appears as
the set of tempered (ultra-) distributions
whose STFT belong to the weighted and mixed Lebesgue space
$L^{p,q}_{(\omega )}$.

\par

\section*{Acknowledgement}
The author is grateful to Ahmed Abdeljawad for careful reading of the manuscript
and giving valuable comments, leading to improvements of the content and the
style.

\par

\section{Preliminaries}\label{sec1}

\par

In this section we discuss basic properties for modulation
spaces and other related spaces. The proofs are in many cases omitted
since they can be found in \cite
{Fe2,Fe3,Fe4,FG1,FG2,FG4,GaSa,Gc2,To20}.

\par

\subsection{Weight functions}\label{subsec1.1}

A \emph{weight} or \emph{weight function} on $\rr d$ is a positive function
in $L^\infty _{loc}(\rr d)$. Let $\omega$ and $v$ be weights on $\rr d$.
Then $\omega$ is called \emph{$v$-moderate} or \emph{moderate},
if
\begin{equation}\label{e1.1}
\omega (x_1+x_2)\lesssim \omega (x_1) v(x_2),\quad x_1,x_2\in \rr d .
\end{equation}
Here $f(\theta )\lesssim g(\theta )$ means that $f(\theta )\le cg(\theta)$ for some
constant $c>0$ which is independent of $\theta$ in the domain of $f$ and $g$.
If $v$ can be chosen as polynomial, then $\omega$ is called a weight of
polynomial type. The weight function $v$ is called \emph{submultiplicative} if
it is even and \eqref{e1.1} holds for $\omega =v$.

\par

We let $\mascP _E(\rr d)$ be the set of all moderate weights on
$\rr d$, and $\mascP (\rr d)$ be the subset of $\mascP  _E(\rr d)$
which consists of all polynomially moderate functions on $\rr d$.
We also let $\mascP _{E,s}(\rr d)$ ($\mascP _{E,s}^0(\rr d)$) be the set of
all weights $\omega$ in $\rr d$ such that
\begin{equation}\label{Eq:ModWeightProp}
\omega (x_1+x_2)\lesssim \omega (x_1) e^{r|x_2|^{\frac 1s}},\quad x_1,x_2\in \rr d .
\end{equation}
for some $r>0$ (for every $r>0$). We have
\begin{alignat*}{3}
\mascP &\subseteq \mascP _{E,s_1}^0\subseteq \mascP _{E,s_1}\subseteq
\mascP _{E,s_2}^0\subseteq \mascP _E & \quad &\text{when} & s_2&<s_1 
\intertext{and}
\mascP _{E,s} &= \mascP _E & \quad &\text{when} &\quad  s&\le 1 ,
\end{alignat*}
where the last equality follows from the fact that if $\omega \in \mascP _E(\rr d)$
($\omega \in \mascP _E^0(\rr d)$), then
\begin{equation}\label{Eq:ModWeightPropCons}
\omega (x+y)\lesssim \omega (x) e^{r|y|^{\frac 1s}}
\quad \text{and}\quad
e^{-r|x|}\le \omega (x)\lesssim e^{r|x|},\quad
x,y\in \rr d
\end{equation}
hold true for some $r>0$ (for every $r>0$) (cf. \cite{Gc2.5}).

%

\par

\subsection{Gelfand-Shilov spaces}\label{subsec1.2}

\par

Let $0<h,s,\sigma \in \mathbf R$ be fixed. Then $\maclS _{s,h}^\sigma (\rr d)$
consists of all $f\in C^\infty (\rr d)$ such that
\begin{equation}\label{gfseminorm}
\nm f{\maclS _{s,h}^\sigma}\equiv \sup \frac {|x^\beta \partial ^\alpha
f(x)|}{h^{|\alpha  + \beta |}\alpha !^s\, \beta !^t}
\end{equation}
is finite. Here the supremum should be taken over all $\alpha ,\beta \in
\mathbf N^d$ and $x\in \rr d$.

\par

Obviously $\maclS _{s,h}^\sigma$ is a Banach space, contained in $\mascS$,
and which increases with $h$, $s$ and $t$ and 
$\maclS _{s,h}^\sigma \hookrightarrow \mathscr S$. Here and
in what follows we use the notation $A\hookrightarrow B$ when the topological
spaces $A$ and $B$ satisfy $A\subseteq B$ with continuous embeddings.

\par

The \emph{Gelfand-Shilov spaces} $\maclS _{t}^s(\rr d)$ and
$\Sigma _{t}^s(\rr d)$ are defined as the inductive and projective 
limits respectively of $\maclS _{s,h}^\sigma (\rr d)$. This implies that
\begin{equation}\label{GSspacecond1}
\maclS _t^{s}(\rr d) = \bigcup _{h>0}\maclS _{s,h}^\sigma (\rr d)
\quad \text{and}\quad \Sigma _t^{s}(\rr d) =\bigcap _{h>0}
\maclS _{s,h}^\sigma (\rr d),
\end{equation}
and that the topology for $\maclS _t^{s}(\rr d)$ is the strongest
possible one such that the inclusion map from $\maclS _{s,h}^\sigma
(\rr d)$ to $\maclS _t^{s}(\rr d)$ is continuous, for every choice 
of $h>0$. The space $\Sigma _s^\sigma (\rr d)$ is a Fr{\'e}chet space
with seminorms $\nm \cdo{\maclS _{s,h}^\sigma}$, $h>0$. Moreover,
$\Sigma _s^\sigma (\rr d)\neq \{ 0\}$, if and only if $s+\sigma \ge 1$ and
$(s,\sigma )\neq (\frac 12,\frac 12)$, and $\maclS _s^\sigma (\rr d)\neq \{ 0\}$,
if and only if $s+\sigma \ge 1$. If $s$ and $\sigma$ are chosen such that
$\Sigma _s^\sigma (\rr d)\neq \{ 0\}$, then $\Sigma _s^\sigma (\rr d)$
is dense in $\mascS (\rr d)$ and in $\maclS _s^\sigma (\rr d)$.
The same is true with $\maclS _s^\sigma (\rr d)$
in place of $\Sigma _s^\sigma (\rr d)$ (cf. \cite{GS}).

\medspace

The \emph{Gelfand-Shilov distribution spaces} $(\maclS _s^\sigma )'(\rr d)$
and $(\Sigma _s^\sigma )'(\rr d)$ are the projective and inductive limit
respectively of $(\maclS _{s,h}^\sigma )'(\rr d)$.  This means that
\begin{equation}\tag*{(\ref{GSspacecond1})$'$}
(\maclS _s^\sigma )'(\rr d) = \bigcap _{h>0}(\maclS _{s,h}^\sigma )'(\rr d)\quad
\text{and}\quad (\Sigma _s^\sigma )'(\rr d) =\bigcup _{h>0}(\maclS _{s,h}^\sigma )'(\rr d).
\end{equation}
We remark that in \cite{GS} it is proved that $(\maclS _s^\sigma )'(\rr d)$
is the dual of $\maclS _s^\sigma (\rr d)$, and $(\Sigma _s^\sigma )'(\rr d)$
is the dual of $\Sigma _s^\sigma (\rr d)$ (also in topological sense). For conveniency we
set
$$
\maclS _s=\maclS _s^s,\quad \maclS _s'=(\maclS _s^s)',\quad
\Sigma _s=\Sigma _s^s
\quad \text{and}\quad
\Sigma _s'=(\Sigma _s^s)'.
$$

\par

For every admissible $s,\sigma >0$ and $\ep >0$ we have
\begin{multline}\label{GSembeddings}
\Sigma _s^\sigma  (\rr d) \hookrightarrow 
\maclS _s^\sigma (\rr d) \hookrightarrow  \Sigma _{s+\ep}^{\sigma +\ep}(\rr d)
\hookrightarrow \mascS (\rr d)
\\[1ex]
\hookrightarrow \mascS '(\rr d)\hookrightarrow
(\Sigma _{s+\ep}^{\sigma +\ep})' (\rr d) \hookrightarrow  (\maclS _s^\sigma )'(\rr d)
\hookrightarrow  (\Sigma _s^\sigma )'(\rr d).
\end{multline}

\par

From now on we let $\mathscr F$ be the Fourier transform which
takes the form
$$
(\mathscr Ff)(\xi )= \widehat f(\xi ) \equiv (2\pi )^{-\frac d2}\int _{\rr
{d}} f(x)e^{-i\scal  x\xi }\, dx
$$
when $f\in L^1(\rr d)$. Here $\scal \cdo \cdo$ denotes the usual
scalar product on $\rr d$. The map $\mathscr F$ extends 
uniquely to homeomorphisms on $\mathscr S'(\rr d)$,
from $(\maclS _s^\sigma )'(\rr d)$ to $(\maclS _\sigma ^s)'(\rr d)$ and
from $(\Sigma _s^\sigma )'(\rr d)$ to $(\Sigma _\sigma ^s)'(\rr d)$. Furthermore,
$\mascF$ restricts to
homeomorphisms on $\mathscr S(\rr d)$, from
$\maclS _s^\sigma (\rr d)$ to $\maclS _\sigma ^s(\rr d)$ and
from $\Sigma _s^\sigma (\rr d)$ to $\Sigma _\sigma ^s(\rr d)$,
and to a unitary operator on $L^2(\rr d)$. Similar facts hold true
when $s=\sigma$ and the Fourier transform is replaced by a partial
Fourier transform.

\par

Let $\phi \in \maclS _s^\sigma (\rr d)$ be fixed. Then the \emph{short-time
Fourier transform} $V_\phi f$ of $f\in (\maclS _s ^\sigma )'
(\rr d)$ with respect to the \emph{window function} $\phi$ is
the Gelfand-Shilov distribution on $\rr {2d}$, defined by
$$
V_\phi f(x,\xi ) \equiv  (f,\phi (\cdo -x)e^{i\scal \cdo \xi}).
$$
If in addition $f$ is an integrable function, then
$$
V_\phi f(x,\xi ) = (2\pi )^{-\frac d2}\int f(y)\overline {\phi
(y-x)}e^{-i\scal y\xi}\, dy .
$$

\par

Gelfand-Shilov spaces and their distribution spaces can in convenient
ways be characterized by means of estimates of Fourier and
short-time Fourier transforms (see e.{\,}g. \cite{ChuChuKim,GZ,To18,To22}).
Here some extension of the map $(f,\phi )\mapsto V_\phi f$ are also given,
for example that this map is uniquely extendable to a continuous map
from $(\maclS _s^\sigma )'(\rr d)\times (\maclS _s^\sigma )'(\rr d)$
to $(\maclS _{s,\sigma}^{\sigma ,s} )'(\rr {2d})$ (see also \cite{AbCaTo}
for notations).

\par

\subsection{Modulation spaces}\label{subsec1.3}

\par

We recall that a quasi-norm $\nm {\cdo}{\mascB}$ of order $r \in (0,1]$ on the
vector-space $\mascB$ over $\mathbf C$ is a nonnegative functional on
$\mascB$ which satisfies
\begin{alignat}{2}
 \nm {f+g}{\mascB} &\le 2^{\frac 1r-1}(\nm {f}{\mascB} + \nm {g}{\mascB}), &
\quad f,g &\in \mascB ,
\label{Eq:WeakTriangle1}
\\[1ex]
\nm {\alpha \cdot f}{\mascB} &= |\alpha| \cdot \nm f{\mascB},
& \quad \alpha &\in \mathbf{C},
\quad  f \in \mascB
\notag
\intertext{and}
   \nm f {\mascB} &= 0\quad  \Leftrightarrow \quad f=0. & &
\notag
\end{alignat}

\par

The vector space $\mascB$ is called a quasi-Banach space if it is a complete quasi-normed
space.
If $\mascB$ is a quasi-Banach space with quasi-norm satisfying \eqref{Eq:WeakTriangle1}
then on account of \cite{Aik,Rol} there is an equivalent quasi-norm to $\nm \cdo {\mascB}$
which additionally satisfies
\begin{align}\label{Eq:WeakTriangle2}
\nm {f+g}{\mascB}^r \le \nm {f}{\mascB}^r + \nm {g}{\mascB}^r, 
\quad f,g \in \mascB .
\end{align}
From now on we always assume that the quasi-norm of the quasi-Banach space $\mascB$
is chosen in such way that both \eqref{Eq:WeakTriangle1} and \eqref{Eq:WeakTriangle2}
hold.

\par

Let $\phi \in \Sigma _1(\rr d)\setminus 0$, $p,q\in (0,\infty ]$
and $\omega \in\mascP _E(\rr {2d})$ be fixed. Then the
\emph{modulation space} $M^{p,q}_{(\omega )}(\rr d)$ consists of all
$f\in \Sigma _1'(\rr d)$ such that
\begin{equation}\label{modnorm}
\nm f{M^{p,q}_{(\omega )}} \equiv \Big ( \int \Big ( \int |V_\phi f(x,\xi
)\omega (x,\xi )|^p\, dx\Big )^{q/p} \, d\xi \Big )^{1/q} <\infty
\end{equation}
(with the obvious modifications when $p=\infty$ and/or
$q=\infty$). We set $M^p_{(\omega )}=M^{p,p}_{(\omega )}$, and
if $\omega =1$, then we set $M^{p,q}=M^{p,q}_{(\omega )}$
and $M^{p}=M^{p}_{(\omega )}$.

\par

The following proposition is a consequence of well-known facts
in \cite {Fe4,GaSa,Gc2,To20}. Here and in what follows, we let $p'$
denotes the conjugate exponent of $p$, i.{\,}e.
$$
p'
=
\begin{cases}
\infty & \text{when}\ p\in (0,1]
\\[1ex]
\displaystyle{\frac p{p-1}} & \text{when}\ p\in (1,\infty )
\\[2ex]
1 & \text{when}\ p=\infty \, .
\end{cases}
$$

\par

\begin{prop}\label{p1.4}
Let $p,q,p_j,q_j,r\in (0,\infty ]$ be such that $r\le \min (1,p,q)$,
$j=1,2$, let $\omega
,\omega _1,\omega _2,v\in\mascP _E(\rr {2d})$ be such that $\omega$
is $v$-moderate, $\phi \in M^r_{(v)}(\rr d)\setminus 0$, and let $f\in
\Sigma _1'(\rr d)$. Then the following is true:
\begin{enumerate}
\item $f\in
M^{p,q}_{(\omega )}(\rr d)$ if and only if \eqref{modnorm} holds,
i.{\,}e. $M^{p,q}_{(\omega )}(\rr d)$ is independent of the choice of
$\phi$. Moreover, $M^{p,q}_{(\omega )}$ is a quasi-Banach space under the
quasi-norm in \eqref{modnorm}, and different choices of $\phi$ give rise to
equivalent quasi-norms.

\par

If in addition $p,q\ge 1$, then
$M^{p,q}_{(\omega )}(\rr d)$ is a Banach space with norm \eqref{modnorm};

\vrum

\item if  $p_1\le p_2$,
$q_1\le q_2$ and $\omega _2\lesssim \omega _1$, then
\begin{alignat*}{3}
\Sigma _1(\rr d)&\subseteq &M^{p_1,q_1}_{(\omega _1)}(\rr
d) &\subseteq  & M^{p_2,q_2}_{(\omega _2)}(\rr d)&\subseteq 
\Sigma _1'(\rr d).
\end{alignat*}
\end{enumerate}
\end{prop}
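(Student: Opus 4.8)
The plan is to prove Proposition~\ref{p1.4} in the standard way one verifies independence of window and the chain of embeddings for modulation spaces, following the coorbit-space philosophy of Feichtinger and Gr{\"o}chenig but in the quasi-Banach range. The key analytic tool is the \emph{change-of-window formula} for the short-time Fourier transform: for $\phi ,\psi ,\gamma \in \Sigma _1(\rr d)$ with $(\gamma ,\psi )\neq 0$ one has the reproducing identity
\begin{equation*}
V_\phi f(x,\xi ) = \frac 1{(\gamma ,\psi )}\big( V_\psi f * _{\text{(twisted)}} V_\phi \gamma \big)(x,\xi ),
\end{equation*}
or more precisely the pointwise estimate
\begin{equation*}
|V_\phi f(x,\xi )| \lesssim \big( |V_\psi f| * |V_\gamma \phi | \big)(x,\xi ),\qquad (x,\xi )\in \rr {2d}.
\end{equation*}
First I would fix an auxiliary window $\psi \in \Sigma _1(\rr d)\setminus 0$ (so that $\psi \in M^r_{(v)}$ automatically, since $\Sigma _1 \hookrightarrow M^r_{(v)}$ for every admissible $v$), and treat this $\psi$ as a reference. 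For part~(1), given $\phi \in M^r_{(v)}(\rr d)\setminus 0$, I apply the estimate above with $\gamma = \phi$ in one direction and with the roles of $\phi$ and $\psi$ interchanged in the other; the envelope $|V_\psi \phi|$ (respectively $|V_\phi \psi|$) lies in $L^r_{(v_0)}(\rr {2d})$ for a suitable submultiplicative $v_0$ controlling $v$ and $\omega$, because $\phi \in M^r_{(v)}$ precisely says $V_\psi \phi \in L^r_{(v)}$. Then the $v$-moderateness of $\omega$, i.e. $\omega (z_1+z_2)\lesssim \omega (z_1)v(z_2)$, lets me pull the weight inside the convolution,
\begin{equation*}
\omega (z)\,|V_\phi f(z)| \lesssim \big( (\omega |V_\psi f|) * (v_0 |V_\gamma \phi |)\big)(z),
\end{equation*}
and Young's inequality for mixed quasi-normed Lebesgue spaces — valid because $r\le \min(1,p,q)$ guarantees the convolving factor sits in $L^r$ with $r\le$ the exponents — gives $\nm f{M^{p,q}_{(\omega )}}$ computed with $\phi$ bounded by a constant times the same norm computed with $\psi$, and symmetrically. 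This simultaneously proves well-definedness (independence of window, equivalence of quasi-norms) and, by applying it to the defining window $\phi \in \Sigma _1$ versus an arbitrary $M^r_{(v)}$ window, the characterization claimed. Completeness follows from the fact that $f\mapsto V_\psi f$ is an isometry onto a closed subspace of the (quasi-)Banach space $L^{p,q}_{(\omega )}(\rr {2d})$: closedness uses that STFT images are characterized by the twisted convolution reproducing relation, which passes to limits. When $p,q\ge 1$ the quasi-norm is genuinely a norm and $L^{p,q}_{(\omega )}$ is a Banach space, giving the last assertion of~(1).

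For part~(2), the embedding $M^{p_1,q_1}_{(\omega _1)}\subseteq M^{p_2,q_2}_{(\omega _2)}$ when $p_1\le p_2$, $q_1\le q_2$, $\omega _2\lesssim \omega _1$ is reduced, via part~(1), to the corresponding inclusion $L^{p_1,q_1}_{(\omega _1)}(\rr {2d})\hookrightarrow L^{p_2,q_2}_{(\omega _2)}(\rr {2d})$ of weighted mixed Lebesgue spaces over $\rr {2d}$. This inclusion is \emph{not} true on all of $\rr {2d}$ for general exponents, but it holds on the range of the STFT because $V_\psi \psi \in \Sigma _1(\rr {2d})$ forces a built-in local regularity: using the reproducing formula once more with both windows equal to a fixed $\psi \in \Sigma _1$, one sees that $|V_\psi f|$ is dominated by its own convolution with the rapidly decaying kernel $|V_\psi \psi |$, hence is ``locally in $L^\infty$ uniformly'', so that a local $L^{p_1}$ bound upgrades to a local $L^{p_2}$ bound with a uniform constant, and then summing/integrating the $\xi$-blocks with $q_1\le q_2$ and $\omega _2\lesssim \omega _1$ finishes it — this is the standard argument that modulation spaces increase with the exponents, and I would cite \cite{Fe4,Gc2,To20} for the mixed-norm Young and convolution-domination lemmas rather than reproving them. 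The two extreme inclusions $\Sigma _1(\rr d)\subseteq M^{p_1,q_1}_{(\omega _1)}(\rr d)$ and $M^{p_2,q_2}_{(\omega _2)}(\rr d)\subseteq \Sigma _1'(\rr d)$ come from: on one side, $f\in \Sigma _1$ implies $V_\psi f\in \Sigma _1(\rr {2d})$, which decays faster than any $e^{-r|\cdot|^{1/s}}$ and so beats any $\omega \in \mascP _E$ (using the bound $\omega (z)\lesssim e^{r|z|}$ from \eqref{Eq:ModWeightPropCons}) in any mixed quasi-norm; on the other side, membership in $M^{p_2,q_2}_{(\omega _2)}$ gives polynomial-type control of $V_\psi f$ after dividing by $\omega _2$, which is enough to define a continuous functional on $\Sigma _1$ via the STFT inversion formula $f = C\iint V_\psi f(z)\,\Pi(z)\psi\,dz$ tested against $\Sigma _1$ elements.

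The main obstacle, and the step I would be most careful about, is the quasi-Banach range $r\le \min(1,p,q)$: Young's inequality and the Minkowski-type interchange of integrals that underlie the change-of-window estimate require honest (sub)additivity, which is why one must work with the $r$-norm satisfying \eqref{Eq:WeakTriangle2} and insist that the convolving factor $|V_\gamma \phi|$ lies in $L^r_{(v)}$ — precisely the hypothesis $\phi \in M^r_{(v)}\setminus 0$. Concretely, the convolution $ (F * G)$ with $G\in L^r(\rr {2d})$, $0<r\le 1$, and $F\in L^{p,q}$ obeys $\nm{F*G}{L^{p,q}}\lesssim \nm F{L^{p,q}}\nm G{L^r}$ only when $r\le \min(1,p,q)$, and getting the weight to cooperate needs the submultiplicative majorant $v_0$ of both $v$ and the moderateness weight of $\omega$, which exists because $\mascP _E$ is closed under the relevant operations. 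Everything else is bookkeeping with the reproducing formula and the elementary weight estimates recorded in Subsection~\ref{subsec1.1}.
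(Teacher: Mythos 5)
The paper itself gives no proof of Proposition \ref{p1.4}; it is stated as a consequence of known results in \cite{Fe4,GaSa,Gc2,To20}, so your proposal has to be measured against the standard arguments in those references. Your route --- change of window via $|V_\phi f|\lesssim |V_\psi f|\ast |V_\gamma \phi |$, moderateness of $\omega$ to pull the weight through the convolution, completeness via identifying $M^{p,q}_{(\omega )}$ with a closed subspace of $L^{p,q}_{(\omega )}(\rr {2d})$, and local boundedness of short-time Fourier transforms for the inclusions in (2) --- is exactly that standard route, and in the Banach case $p,q\ge 1$, $r=1$ your argument is the complete Gr{\"o}chenig-type proof.

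There is, however, a genuine gap at the step you yourself single out as the crux. The convolution estimate you assert, $\nm {F\ast G}{L^{p,q}}\lesssim \nm F{L^{p,q}}\nm G{L^r}$ for general $F\in L^{p,q}$ and $G\in L^r(\rr {2d})$ with $r\le \min (1,p,q)$, is false whenever $\min (p,q)<1$: taking $p=q=r<1$ and $F=G=\chi _{[0,1/n]^{2d}}$ gives $\nm {F\ast G}{L^r}\big / \big ( \nm F{L^r}\nm G{L^r}\big )\asymp n^{2d(\frac 1r-1)}\to \infty$. The underlying obstruction is that for $r<1$ the local estimate between $L^1$ and $L^r$ on unit cubes goes the wrong way, so knowing only $|V_\gamma \phi |\, v_0\in L^r$ cannot control the convolution. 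The repair, which is precisely how \cite{GaSa} and \cite{To20} treat $p,q<1$, is to measure the window factor in a Wiener amalgam norm: one first shows that $\phi \in M^r_{(v)}$ forces $V_\gamma \phi \in W(L^\infty ,\ell ^r_{(v_0)})(\rr {2d})$ (an upgrade from local $L^r$ to local $L^\infty$ control, obtained from the reproducing formula), and then uses the discrete Young inequality $\ell ^r\ast \ell ^{p,q}\subseteq \ell ^{p,q}$, which is valid since $r\le \min (1,p,q)$; alternatively one discretizes with Gabor frames as in \cite{To20}. Notice that you invoke exactly this ``locally $L^\infty$ uniformly'' upgrade in part (2), where it is a convenience, but omit it in part (1), where it is indispensable; the same amalgam or discretization argument is also what justifies passing to the limit in the reproducing relation in your closed-range completeness step when $p,q<1$. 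With that substitution your outline coincides with the proofs in the cited literature.
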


\par

We refer to \cite {Fe4,FG1,FG2,FG4,GaSa,Gc2,RSTT,To20}
for more facts about modulation spaces.

\par

\subsection{A broader family of modulation spaces}

\par

As announced in the introduction we consider in Section  \ref{sec2}
mapping properties for pseudo-differential operators when acting on
a broader class of modulation spaces, which are defined by imposing certain
types of translation invariant solid BF-space norms on the short-time
Fourier transforms. (Cf. \cite{Fe4,Fe6,Fe8,FG1,FG2}.)

\par

\begin{defn}\label{bfspaces1}
Let $\mascB \subseteq L^r_{loc}(\rr d)$ be a quasi-Banach
of order $r\in (0,1]$, and let $v \in\mascP _E(\rr d)$.
Then $\mascB$ is called a \emph{translation invariant
Quasi-Banach Function space on $\rr d$} (with respect to $v$), or \emph{invariant
QBF space on $\rr d$}, if there is a constant $C$ such
that the following conditions are fulfilled:
\begin{enumerate}
\item if $x\in \rr d$ and $f\in \mascB$, then $f(\cdo -x)\in
\mascB$, and 
\begin{equation}\label{translmultprop1}
\nm {f(\cdo -x)}{\mascB}\le Cv(x)\nm {f}{\mascB}\text ;
\end{equation}

\vrum

\item if  $f,g\in L^r_{loc}(\rr d)$ satisfy $g\in \mascB$ and $|f|
\le |g|$, then $f\in \mascB$ and
$$
\nm f{\mascB}\le C\nm g{\mascB}\text .
$$
\end{enumerate}
\end{defn}

\par

If $v$ belongs to $\mascP _{E,s}(\rr d)$
($\mascP _{E,s}^0(\rr d)$) , then $\mascB$ in Definition \ref{bfspaces1}
is called an invariant BF-space of Roumieu type (Beurling type) of order $s$.

\par

It follows from (2) in Definition \ref{bfspaces1} that if $f\in
\mascB$ and $h\in L^\infty$, then $f\cdot h\in \mascB$, and
\begin{equation}\label{multprop}
\nm {f\cdot h}{\mascB}\le C\nm f{\mascB}\nm h{L^\infty}.
\end{equation}
If $r=1$, then $\mascB$ in Definition \ref{bfspaces1} is a Banach
space, and the condition (2)  means that a
translation invariant QBF-space is a solid BF-space in the sense of
(A.3) in \cite{Fe6}. 
The space $\mascB$ in Definition \ref{bfspaces1} is called an
\emph{invariant BF-space} (with respect to $v$) if $r=1$, and
Minkowski's inequality holds true, i.{\,}e.
\begin{equation}\label{Eq:MinkIneq}
\nm {f*\fy}{\mascB}\lesssim \nm {f}{\mascB}\nm \fy{L^1_{(v)}},
\qquad f\in \mascB ,\ \fy \in C_0^\infty (\rr d).
\end{equation}

\par

\begin{example}\label{Lpqbfspaces}
Assume that $p,q\in [1,\infty ]$, and let $L^{p,q}_1(\rr {2d})$ be the
set of all $f\in L^1_{loc}(\rr {2d})$ such that
$$
\nm  f{L^{p,q}_1} \equiv \Big ( \int \Big ( \int |f(x,\xi )|^p\, dx\Big
)^{q/p}\, d\xi \Big )^{1/q}
$$
if finite. Also let $L^{p,q}_2(\rr {2d})$ be the set of all $f\in
L^1_{loc}(\rr {2d})$ such that
$$
\nm  f{L^{p,q}_2} \equiv \Big ( \int \Big ( \int |f(x,\xi )|^q\, d\xi
\Big )^{p/q}\, dx \Big )^{1/p}
$$
is finite. Then it follows that $L^{p,q}_1$ and $L^{p,q}_2$ are
translation invariant BF-spaces with respect to $v=1$.
\end{example}

\par

For translation invariant BF-spaces we make the
following observation.

\par

\begin{prop}\label{p1.4BFA}
Assume that $v\in\mascP _E(\rr {d})$, and that $\mascB$ is an
invariant BF-space with respect to $v$ such that \eqref{Eq:MinkIneq}
holds true. Then the
convolution mapping $(\fy ,f)\mapsto \fy *f$ from $C_0^\infty (\rr
d)\times \mascB$ to $\mascB$ extends uniquely to a continuous
mapping from
$L^1_{(v )}(\rr d)\times \mascB$ to $\mascB$, and \eqref{Eq:MinkIneq}
holds true for any $f\in \mascB$ and $\fy \in L^1_{(v)}(\rr d)$.
\end{prop}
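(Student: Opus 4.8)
The plan is to extend the bilinear convolution map by density, using the quasi-norm inequality \eqref{Eq:MinkIneq} as the uniform bound that makes the extension well-defined and continuous. First I would fix $f\in\mascB$ and regard convolution as a linear map $T_f\colon C_0^\infty(\rr d)\to\mascB$, $T_f\fy=\fy*f$. By \eqref{Eq:MinkIneq} we have $\nm{T_f\fy}{\mascB}\lesssim\nm f{\mascB}\nm\fy{L^1_{(v)}}$, so $T_f$ is bounded with respect to the $L^1_{(v)}$-norm on its domain. Since $C_0^\infty(\rr d)$ is dense in $L^1_{(v)}(\rr d)$ (here one uses that $v\in\mascP_E(\rr d)$ is locally bounded, so $L^1_{(v)}$ is a genuine weighted $L^1$ space in which compactly supported smooth functions are dense, e.g.\ by truncation and mollification), and since $\mascB$ is complete, $T_f$ extends uniquely to a bounded linear map $\widetilde T_f\colon L^1_{(v)}(\rr d)\to\mascB$ satisfying the same bound. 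This gives, for every $f\in\mascB$ and $\fy\in L^1_{(v)}(\rr d)$, an element $\fy*f:=\widetilde T_f\fy\in\mascB$ with
\begin{equation}
\nm{\fy*f}{\mascB}\lesssim\nm f{\mascB}\nm\fy{L^1_{(v)}}.
\notag
\end{equation}

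Next I would check that this extension is consistent and symmetric in the appropriate sense: when $\fy\in L^1_{(v)}$ and $f\in\mascB\cap L^1_{loc}$ are such that the pointwise convolution integral converges (for instance when $\fy\in C_0^\infty$, or more generally by approximation), $\widetilde T_f\fy$ agrees with the a.e.-defined function $x\mapsto\int\fy(y)f(x-y)\,dy$. This follows because convergence $\fy_n\to\fy$ in $L^1_{(v)}$ forces $\fy_n*f\to\fy*f$ in $\mascB$, hence (by part (2) of Definition \ref{bfspaces1}, i.e.\ solidity, combined with passing to a subsequence) convergence a.e., while the pointwise integrals also converge a.e.\ along a subsequence by dominated convergence using $|\fy_n(y)f(x-y)|\le(\sup_n|\fy_n(y)|)|f(x-y)|$ after a further truncation argument. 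The joint continuity of $(\fy,f)\mapsto\fy*f$ from $L^1_{(v)}\times\mascB$ to $\mascB$ is then immediate from bilinearity and the norm estimate: $\nm{\fy_1*f_1-\fy_2*f_2}{\mascB}\le\nm{(\fy_1-\fy_2)*f_1}{\mascB}+\nm{\fy_2*(f_1-f_2)}{\mascB}\lesssim\nm{\fy_1-\fy_2}{L^1_{(v)}}\nm{f_1}{\mascB}+\nm{\fy_2}{L^1_{(v)}}\nm{f_1-f_2}{\mascB}$.

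The main obstacle I anticipate is the identification step in the previous paragraph: verifying that the abstractly extended $\widetilde T_f\fy$ coincides with the classical convolution integral whenever the latter makes sense, and more importantly that the extended operation is associative and still behaves like convolution (so that \eqref{Eq:MinkIneq} genuinely "holds for any $f\in\mascB$ and $\fy\in L^1_{(v)}$" as an identity about convolutions, not merely about the symbol $\widetilde T_f\fy$). The clean way around this is to observe that solidity plus translation invariance of $\mascB$ already force $\mascB\hookrightarrow L^1_{loc}(\rr d)$ with convergence in $\mascB$ implying convergence in $L^1_{loc}$ on compact sets (this is part of the standard BF-space theory, cf.\ (A.3) in \cite{Fe6}); granting that, one extracts a.e.-convergent subsequences and matches limits. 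Everything else—uniqueness of the extension, the estimate, bilinear continuity—is a formal consequence of density of $C_0^\infty$ in $L^1_{(v)}$ and completeness of $\mascB$.
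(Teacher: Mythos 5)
Your argument is correct and is exactly the route the paper takes: it disposes of the proposition with the single remark that it is a straightforward consequence of the density of $C_0^\infty$ in $L^1_{(v)}$, i.e.\ the same fix-$f$, extend-$T_f$-by-density-and-completeness scheme you carry out (your extra care about identifying the abstract extension with the pointwise convolution integral and about joint continuity only fills in details the paper leaves implicit).
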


\par

The result is a straight-forward consequence of the fact that $C_0^\infty$
is dense in $L^1_{(v)}$.

\par

Next we consider the extended class of modulation spaces which we are interested
in. 

\par

\begin{defn}\label{bfspaces2}
Assume that $\mascB$ is a translation
invariant QBF-space on $\rr {2d}$, $\omega \in\mascP _E(\rr {2d})$,
and that $\phi \in
\Sigma _1(\rr d)\setminus 0$. Then the set $M(\omega ,\mascB )$
consists of all $f\in \Sigma _1'(\rr d)$ such that
$$
\nm f{M(\omega ,\mascB )}
\equiv \nm {V_\phi f\, \omega }{\mascB}
$$
is finite.
\end{defn}

\par

Obviously, we have
$
M^{p,q}_{(\omega )}(\rr d)=M(\omega ,\mascB )$
when  $\mascB =L^{p,q}_1(\rr {2d})$ (cf. Example \ref{Lpqbfspaces}).
It follows that many properties which are valid for the classical modulation
spaces also hold for the spaces of the form $M(\omega ,\mascB )$.
For example we have the following proposition, which shows that
the definition of $M(\omega ,\mascB )$ is independent of the
choice of $\phi$ when $\mascB$ is a Banach space. We omit the proof
since it follows by similar arguments as
in the proof of Proposition 11.3.2 in \cite{Gc2}.

\par

\begin{prop}\label{p1.4BF}
Let $\mascB$ be an invariant BF-space with
respect to $v_0\in \mascP _E(\rr {2d})$ for $j=1,2$. Also let
$\omega ,v\in\mascP _E(\rr {2d})$ be such that $\omega$ is
$v$-moderate, $M(\omega ,\mascB )$ is the same as in Definition
\ref{bfspaces2}, and let $\phi \in M^1_{(v_0v)}(\rr d)\setminus
0$ and $f\in \Sigma _1'(\rr d)$. Then $f\in M(\omega ,\mascB )$
if and only if $V_\phi f\, \omega \in \mascB$, and
different choices of $\phi$ gives rise to equivalent norms in
$M(\omega ,\mascB )$.
\end{prop}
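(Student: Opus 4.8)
The plan is to argue exactly as in the proof of Proposition 11.3.2 in \cite{Gc2}, the one new ingredient being that the mixed Lebesgue-norm version of Young's inequality used there is replaced by the convolution estimate \eqref{Eq:MinkIneq} for $\mascB$, made available through Proposition \ref{p1.4BFA}, while the moderating weights $v$ and $v_0$ have to be tracked carefully. Let $\phi _0\in \Sigma _1(\rr d)\setminus 0$ be the window occurring in Definition \ref{bfspaces2}; since $\Sigma _1(\rr d)\subseteq M^1_{(w)}(\rr d)$ for every $w\in \mascP _E(\rr {2d})$ by Proposition \ref{p1.4}, it suffices to establish the one-sided estimate
$$ \nm {V_\phi f\, \omega }{\mascB }\lesssim \nm {V_\psi f\, \omega }{\mascB } $$
for arbitrary $\phi ,\psi \in M^1_{(v_0v)}(\rr d)\setminus 0$ and $f\in \Sigma _1'(\rr d)$, with constant independent of $f$: interchanging $\phi$ and $\psi$ then gives the equivalence of (quasi-)norms, and the choice $\psi =\phi _0$ identifies $M(\omega ,\mascB )$ with $\sets {f\in \Sigma _1'(\rr d)}{V_\phi f\, \omega \in \mascB }$. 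At the outset I would replace $v$ by a submultiplicative weight $\lesssim v$ with respect to which $\omega$ is still moderate, and $v_0$ by a submultiplicative weight $\lesssim v_0$ with respect to which $\mascB$ still satisfies \eqref{Eq:MinkIneq} (both are the standard ``companion'' weights built from $\omega$ and from the translation/convolution action on $\mascB$). Since $v_0v$ only decreases, the class $M^1_{(v_0v)}(\rr d)$ only grows, so $\phi$ and $\psi$ still belong to it; hence we may assume that $v$, $v_0$ and $v_0v$ are submultiplicative.

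First I would perform the window switch. Fixing an auxiliary $\gamma \in \Sigma _1(\rr d)\setminus 0$ with $(\gamma ,\phi )\neq 0$ and $(\gamma ,\psi )\neq 0$ (possible since $\phi ,\psi \neq 0$ and $\Sigma _1(\rr d)$ is infinite-dimensional), the standard change-of-window estimate coming from the reconstruction formula for the short-time Fourier transform (see \cite{Gc2,To20}) gives the pointwise inequalities $|V_\phi f|\lesssim |V_\gamma f|*|V_\phi \gamma |$ and $|V_\gamma f|\lesssim |V_\psi f|*|V_\gamma \psi |$ on $\rr {2d}$, and hence $|V_\phi f|\lesssim |V_\psi f|*|V_\gamma \psi |*|V_\phi \gamma |$. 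Since $v$ is submultiplicative and $\omega$ is $v$-moderate, one has $\omega (z)\lesssim \omega (z_1)\, v(z-z_1-z_2)\, v(z_2)$ for all $z,z_1,z_2\in \rr {2d}$; multiplying the previous estimate by $\omega$ therefore gives
$$ |V_\phi f(z)|\, \omega (z)\lesssim \big ( (|V_\psi f|\, \omega )*(|V_\gamma \psi |\, v)*(|V_\phi \gamma |\, v) \big )(z),\qquad z\in \rr {2d}. $$

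Next I would pass to $\mascB$. By Proposition \ref{p1.4BFA}, convolution by an element of $L^1_{(v_0)}(\rr {2d})$ maps $\mascB$ continuously into $\mascB$; applying this twice together with the solidity of $\mascB$ (condition (2) in Definition \ref{bfspaces1}) and the displayed pointwise bound yields
$$ \nm {V_\phi f\, \omega }{\mascB }\lesssim \nm {V_\psi f\, \omega }{\mascB }\cdot \nm {|V_\gamma \psi |\, v}{L^1_{(v_0)}}\cdot \nm {|V_\phi \gamma |\, v}{L^1_{(v_0)}}. $$
It then remains to see that the two last factors are finite, i.e. that $V_\gamma \psi$ and $V_\phi \gamma$ belong to $L^1_{(v_0v)}(\rr {2d})$. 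This follows from the facts that $\gamma \in \Sigma _1(\rr d)\subseteq M^1_{(v_0v)}(\rr d)$ (Proposition \ref{p1.4}), that $\phi ,\psi \in M^1_{(v_0v)}(\rr d)$, and that $v_0v$ is submultiplicative, via the classical estimate $\nm {V_{g_1}g_2}{L^1_{(m)}}\lesssim \nm {g_1}{M^1_{(m)}}\nm {g_2}{M^1_{(m)}}$ valid for any submultiplicative weight $m$ and $g_1,g_2\in M^1_{(m)}(\rr d)$ (cf. \cite{Gc2}). Combining these estimates gives the desired one-sided bound, and the proposition follows.

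The main obstacle is this last step, together with the reductions it rests on: the window-switch factors have to be integrable simultaneously against $v$ (which enters through the $v$-moderateness of $\omega$) and against $v_0$ (which enters through \eqref{Eq:MinkIneq} for $\mascB$), and this is exactly what forces the passage to submultiplicative weights carried out at the outset. Everything else is a routine transcription of the argument for the classical modulation spaces $M^{p,q}_{(\omega )}$.
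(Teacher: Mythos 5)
Your argument is correct and is essentially the proof the paper has in mind: the paper omits the details and refers to Proposition 11.3.2 in \cite{Gc2}, whose change-of-window estimate $|V_\phi f|\lesssim |V_\gamma f|\ast |V_\phi\gamma|$ combined with the moderateness of $\omega$ and the convolution bound \eqref{Eq:MinkIneq} (via Proposition \ref{p1.4BFA}) is exactly what you carry out, with the weights $v$, $v_0$ tracked correctly. The auxiliary reductions (submultiplicative companion weights, routing the window switch through $\gamma\in\Sigma_1$) are standard and sound, so no gap.
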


\par

Finally we recall the following result on completeness for $M(\omega ,\mascB )$.
We refer to \cite{To22} for a proof of the first assertion and \cite{PfTo} for the second
one.

\par

\begin{prop}\label{Prop:ModCompleteness}
Let $\omega$
be a weight on $\rr {2d}$, and let $\mascB $ be an invariant
QBF-space with respect to the submultiplicative
$v\in \mascP _E(\rr {2d})$.
Then the following is true:
\begin{enumerate}
\item if in addition $\mascB$ is a mixed quasi-norm space
of Lebesgue types, then $M(\omega ,\mascB )$ is a quasi-Banach space;

\vrum

\item if in addition $\mascB$ an invariant BF-space with respect to $v$,
then $M(\omega ,\mascB )$ is a quasi-Banach space.
\end{enumerate}
\end{prop}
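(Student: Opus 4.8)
The plan is to deduce completeness directly from Definition \ref{bfspaces2}, the completeness of $\mascB$, and the continuity of the short-time Fourier transform on $\Sigma _1'$. Throughout I fix the window $\phi \in \Sigma _1(\rr d)\setminus 0$ used in Definition \ref{bfspaces2}, let $r\in (0,1]$ denote the order of the quasi-norm of $\mascB$, and recall that $\mascB$ is complete.

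First I would establish the continuous inclusions
$$
\Sigma _1(\rr d)\hookrightarrow M(\omega ,\mascB )\hookrightarrow \Sigma _1'(\rr d) .
$$
For the first one, if $f\in \Sigma _1(\rr d)$ then $V_\phi f$ decays faster than $e^{-r_0|(x,\xi )|}$ for every $r_0>0$, while $\omega$ and the weight $v$ of $\mascB$ are at most exponential by \eqref{Eq:ModWeightPropCons}; covering $\rr {2d}$ by unit cubes and combining the translation bound \eqref{translmultprop1} with solidity (condition (2) in Definition \ref{bfspaces1}) then gives $V_\phi f\,\omega \in \mascB$ with $\nm f{M(\omega ,\mascB )}\lesssim \nm f{\maclS _{1,h}^1}$ for a suitable $h>0$. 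For the second one, solidity shows that $\mascB$ embeds continuously into $L^r_{loc}(\rr {2d})$ on each fixed cube, with local constants growing at most like $v$ under translation; since $V_\phi f$ is smooth and its derivatives obey bounds of the same type as $V_\phi f$ itself, a Sobolev type estimate upgrades the local $L^r$ control of $V_\phi f\,\omega$ to a pointwise bound $|V_\phi f(x,\xi )|\lesssim \nm f{M(\omega ,\mascB )}\, e^{r_1|(x,\xi )|}$ for some $r_1>0$, and the characterisation of $\Sigma _1'$ in terms of the short-time Fourier transform then yields $f\in \Sigma _1'(\rr d)$ together with continuity of the inclusion. In particular $\nm \cdo {M(\omega ,\mascB )}$ is a genuine quasi-norm, since $V_\phi f=0$ forces $f=0$.

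The main step is then standard. Let $\{ f_k\}_{k\ge 1}$ be Cauchy in $M(\omega ,\mascB )$. By the second inclusion it is Cauchy, hence convergent, in $\Sigma _1'(\rr d)$ (sequential completeness of $\Sigma _1'$); call the limit $f$. Then $V_\phi f_k(x,\xi )\to V_\phi f(x,\xi )$ for every $(x,\xi )\in \rr {2d}$, since $V_\phi g(x,\xi )=(g,\phi (\cdo -x)e^{i\scal \cdo \xi})$ and $\phi (\cdo -x)e^{i\scal \cdo \xi}\in \Sigma _1(\rr d)$. On the other hand $\{ V_\phi f_k\,\omega \}$ is Cauchy in $\mascB$, hence converges in $\mascB$ to some $G$, and by the continuous embedding of $\mascB$ into $L^r_{loc}(\rr {2d})$ a subsequence of $\{ V_\phi f_k\,\omega \}$ converges to $G$ almost everywhere. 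Comparing the two limits gives $G=V_\phi f\,\omega$ almost everywhere, so $V_\phi f\,\omega \in \mascB$, that is $f\in M(\omega ,\mascB )$, and $\nm {f_k-f}{M(\omega ,\mascB )}=\nm {V_\phi f_k\,\omega -G}{\mascB}\to 0$; hence $M(\omega ,\mascB )$ is complete.

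The two cases in the statement affect only the auxiliary facts used above, not the structure of the argument. In case (2), where $\mascB$ is a genuine invariant BF-space, the embedding $\mascB \hookrightarrow L^1_{loc}$ with translation controlled constants and the inclusion $\Sigma _1\hookrightarrow M(\omega ,\mascB )$ are available from the standard theory of solid translation invariant BF-spaces; in case (1), where $\mascB$ is a mixed Lebesgue quasi-norm space, one verifies the same facts, as well as completeness of $\mascB$ itself, directly from the iterated structure of the quasi-norm. I expect the main obstacle to be the inclusion $M(\omega ,\mascB )\hookrightarrow \Sigma _1'$ in the genuinely quasi-Banach range $r<1$: one must control the local $L^r$ constants of $\mascB$ by the at most exponential weight $v$ and combine this with the derivative estimates for $V_\phi f$ to obtain honest pointwise exponential bounds, and only then appeal to the short-time Fourier transform description of $\Sigma _1'$.
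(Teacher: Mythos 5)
Note first that the paper does not prove this proposition at all: it is stated as a recollection, with the first part referred to \cite{To22} and the second to \cite{PfTo}. So the comparison is with the arguments in those references rather than with an in-paper proof. Your overall skeleton (a Cauchy sequence in $M(\omega ,\mascB )$ gives a Cauchy sequence $V_\phi f_k\,\omega$ in $\mascB$ and a Cauchy sequence $f_k$ in $\Sigma _1'$; identify the two limits) is the standard one and matches the general strategy used there. The problem is that the two auxiliary facts you invoke are exactly where the substance of the proposition lies, and the way you propose to obtain the crucial one does not work in the quasi-Banach range.

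Concretely: (i) the claim that solidity alone gives a continuous embedding $\mascB \hookrightarrow L^r_{loc}$ with translation-controlled constants is not a consequence of Definition \ref{bfspaces1}; in case (2) it follows from Minkowski's inequality \eqref{Eq:MinkIneq} (convolve $|F|$ with a bump, as in Feichtinger's maximal-space argument), and in case (1) it has to be checked directly from the iterated quasi-norm -- you acknowledge this, so this part is repairable. (ii) The serious gap is your route to the \emph{continuity} of $M(\omega ,\mascB )\hookrightarrow \Sigma _1'$ when $r<1$, which you need in order to know that an $M(\omega ,\mascB )$-Cauchy sequence is Cauchy in $\Sigma _1'$. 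Your ``Sobolev type estimate'' requires local $L^r$ control of the derivatives $\partial ^\alpha V_\phi f$; but these derivatives are combinations of polynomial factors times $V_\psi f$ for \emph{other} windows $\psi$, and bounding such quantities by $\nm {V_\phi f\, \omega}{\mascB}$ is a change-of-window statement that is not available at this stage for quasi-Banach $\mascB$ (Proposition \ref{p1.4BF} is stated only for invariant BF-spaces). Nor can you pass through the reproducing inequality $|V_\psi f|\lesssim |V_\phi f|*|V_\psi \phi |$, since for $r<1$ local $L^r$ control of $V_\phi f$ does not control local $L^1$ norms ($L^1(Q)\subset L^r(Q)$). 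The known way to close this step -- used in \cite{GaSa,To20,To22} -- is the sub-mean-value (log-subharmonicity) property of $|V_\phi f|$ for a Gaussian window, which yields $|V_\phi f(z)|^r\lesssim \int _{z+Q}|V_\phi f(w)|^r\, dw$ with the \emph{same} window and hence the pointwise exponential bound directly from the local $L^r$ information (alternatively one argues via Gabor frame expansions). Without this ingredient your proof of case (1) is incomplete; case (2), where $r=1$, can indeed be finished along your lines using \eqref{Eq:MinkIneq} and the convolution inequality.
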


\par

\subsection{Mixed quasi-normed Lebesgue spaces}
In most cases, the quasi-Banach spaces $\mascB$ are
mixed quasi-normed Lebesgue space, which are defined next. 
Let $E= \{ e_1,\dots,e_d \}$ be an orderd basis of $\rr d$.
Then the corresponding lattice is 
\begin{equation*}
\Lambda _E =\sets{j_1e_1+\cdots +j_de_d}{(j_1,\dots,j_d)\in \zz d},
\end{equation*}

\par

We define for each $\mabfq =(q_1,\dots ,q_d)\in (0,\infty ]^d$
$$
\max \mabfq =\max (q_1,\dots ,q_d)
\quad \text{and}\quad
\min \mabfq =\min (q_1,\dots ,q_d).
$$

\par

\begin{defn}\label{Def:MixedLebSpaces}
Let $E = \{ e_1,\dots ,e_d\}$ be an orderd basis of $\rr d$, $\omega$ be a weight on
$\rr d$,
$\mabfp =(p_1,\dots ,p_d)\in (0,\infty ]^{d}$ and $r=\min (1,\mabfp )$.
If  $f\in L^r_{loc}(\rr d)$, then
$$
\nm f{L^{\mabfp }_{E,(\omega )}}\equiv
\nm {g_{d-1}}{L^{p_{d}}(\mathbf R)},
$$
where  $g_k(z_k)$, $z_k\in \rr {d-k}$,
$k=0,\dots ,d-1$, are inductively defined as
\begin{align*}
g_0(x_1,\dots ,x_{d}) &\equiv |f(x_1e_1+\cdots +x_{d}e_d)
\omega (x_1e_1+\cdots +x_{d}e_d)|,
\\[1ex]
\intertext{and}
g_k(z_k) &\equiv
\nm {g_{k-1}(\cdo ,z_k)}{L^{p_k}(\mathbf R)},
\quad k=1,\dots ,d-1.
\end{align*}
The space $L^{\mabfp }_{E,(\omega )}(\rr d)$ consists
of all $f\in L^r_{loc}(\rr d)$ such that
$\nm f{L^{\mabfp}_{E,(\omega )}}$ is finite, and is called
\emph{$E$-split Lebesgue space (with respect to $\mabfp$ and $\omega$}).
\end{defn}

\par

Let $E$, $\mabfp$ and $\omega$ be the same as in Definition \ref{Def:MixedLebSpaces}.
Then the discrete version $\ell ^{\mabfp}_{E,(\omega )}(\Lambda _E)$
of $L^{\mabfp}_{E,(\omega )}(\rr d)$ is the set of all
sequences $a=\{ a(j)\} _{j\in \Lambda _E}$ such that the quasi-norm
$$
\nm a{\ell ^{\mabfp}_{E,(\omega )}}
\equiv
\nm {f_a}{L^{\mabfp}_{E,(\omega )}},\qquad f_a=\sum _{j\in \Lambda _E} a(j)\chi _j,
$$
is finite. Here $\chi _j$ is the characteristic function of $j+\kappa (E)$,
where $\kappa (E)$ is the parallelepiped spanned by the basis $E$.
We also set $L^{\mabfp}_{E} = L^{\mabfp}_{E,(\omega )}$
and $\ell ^{\mabfp}_{E} = \ell ^{\mabfp}_{E,(\omega )}$
when $\omega =1$.

\par

\begin{defn}\label{Def:MixedPhaseShiftLebSpaces}
Let $E$ be an ordered basis of the phase space $\rr {2d}$. Then
$E$ is called \emph{phase split}
if there is a subset $E_0\subseteq E$ such that the span of $E_0$ equals
$\sets {(x,0)\in \rr {2d}}{x\in \rr d}$, and the span of $E\setminus E_0$
equals $\sets {(0,\xi )\in \rr {2d}}{\xi \in \rr d}$.
\end{defn}

\par

\subsection{Pseudo-differential operators}

\par

Next we recall some facts on pseudo-differential operators. Let
$A\in \GL (d,\mathbf R)$ be fixed and
let $a\in \Sigma _1(\rr {2d})$. Then the pseudo-differential
operator $\op _A(a)$ is the linear and continuous operator on $\Sigma _1(\rr d)$,
defined by the formula
\begin{multline}\label{e0.5}
(\op _A(a)f)(x)
\\[1ex]
=
(2\pi  ) ^{-d}\iint a(x-A(x-y),\xi )f(y)e^{i\scal {x-y}\xi }\,
dyd\xi .
\end{multline}
The definition of $\op _A(a)$ extends to
any $a\in \Sigma _1'(\rr {2d})$, and then $\op _t(a)$ is continuous from
$\Sigma _1(\rr d)$ to $\Sigma _1'(\rr d)$. Moreover, for every fixed
$A\in \GL (d,\mathbf R)$, it follows that there is a one to
one correspondence between such operators, and pseudo-differential
operators of the form $\op _A(a)$. (See e.{\,}g. \cite {Ho1}.)
If $A=2^{-1}I$, where $I\in \GL (d,\mathbf R)$ is the identity matrix, then
$\op _A(a)$ is equal to the Weyl operator $\op ^w(a)$
of $a$. If instead $A=0$, then the standard (Kohn-Nirenberg)
representation $\op (a)$ is obtained.

\par

If $a_1,a_2\in \Sigma _1'(\rr {2d})$ and $A_1,A_2\in
 \GL (d,\mathbf R)$, then
\begin{equation}\label{pseudorelation}
\op _{A_1}(a_1)=\op _{A_2}(a_2) \quad \Leftrightarrow \quad a_2(x,\xi
)=e^{i\scal {(A_1-A_2)D_\xi}{D_x}}a(x,\xi ).
\end{equation}
(Cf. \cite{Ho1}.)

\par

The following special case of \cite[Theorem 3.1]{To24} is important when discussing
continuity of pseudo-differential operators when acting on quasi-Banach modulation
spaces.

\par

\begin{prop}\label{Prop:OpCont}
Let $\omega _1,\omega _2
\in \mathscr P_{E}(\rr {2d})$ and $\omega _0\in \mathscr P_{E}(\rr {2d}\oplus \rr {2d})$
be such that
\begin{equation}\label{Eq:WeightfracCond1}
\frac {\omega _2(x,\xi  )}{\omega _1
(y,\eta )} \lesssim \omega _0( x,\eta ,\xi -\eta ,y-x ).
\end{equation}
Also let $\mabfp \in (0,\infty]^{2d}$,
$E$ be a phase split basis of $\rr {2d}$  and let
$a\in M^{\infty ,1}_{(\omega _0)}(\rr {2d})$. Then
$\op _0(a)$ is continuous from $M(L^{\mabfp ,E},
\omega _1)$ to $M(L^{\mabfp ,E},\omega _2)$.
\end{prop}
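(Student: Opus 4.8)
The plan is that this Proposition is exactly the specialization of \cite[Theorem 3.1]{To24} to the concrete quasi-Banach function spaces $\mascB=L^{\mabfp}_{E}(\rr {2d})$, so the quickest route is to verify that each such $L^{\mabfp}_{E}$ is an admissible translation invariant QBF-space in the sense required there, and that the phase-split hypothesis on $E$ is precisely the structural condition invoked in \cite{To24}; the conclusion then follows at once. Concretely one checks from Definition \ref{Def:MixedLebSpaces} that $L^{\mabfp}_{E}$ is solid and translation invariant (with $v=1$ and constant $1$), by iterating the corresponding one-dimensional facts over the slices determined by the basis $E$, and that it is a quasi-Banach space of order $r=\min(1,\mabfp)$. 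I sketch below the self-contained argument underlying \cite[Theorem 3.1]{To24}, since that is the substantive content.

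Fix a window $\phi\in\Sigma _1(\rr d)\setminus 0$ with $\nm{\phi}{L^2}=1$ and form a rank-one window $\Phi\in\Sigma _1(\rr {2d})$ adapted to the Kohn--Nirenberg normalization (a Wigner-type window built from $\phi$). The decisive step is the calculus identity on the STFT side: there are a window $\Psi\in\Sigma _1(\rr {2d})$ depending only on $\phi$ and $\Phi$, together with an affine change of variables, such that for $a\in\Sigma _1(\rr {2d})$ and $f\in\Sigma _1(\rr d)$
\[
|V_\Phi(\op _0(a)f)(x,\xi)|\;\lesssim\;\iint \big|V_\Psi a\big(w,\,x,\eta,\xi-\eta,y-x\big)\big|\,|V_\phi f(y,\eta)|\,dy\,d\eta ,
\]
up to harmless unimodular phase factors, with $w$ the variable in the first $\rr {2d}$-block of $V_\Psi a$. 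Thus $|V_\Phi(\op _0(a)f)|$ is pointwise dominated by a convolution of a function manufactured from $a$ with one manufactured from $f$. (Alternatively, one can pass through the kernel of $\op _0(a)$ and a Feichtinger–Gr\"ochenig–Heil type boundedness criterion for integral operators whose kernels lie in a modulation space; both routes lead to the same estimate.)

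Next I would transfer the weights. The hypothesis \eqref{Eq:WeightfracCond1} gives $\omega _2(x,\xi)\lesssim \omega _1(y,\eta)\,\omega _0(x,\eta,\xi-\eta,y-x)$ along the gliding arguments above, whence
\[
|V_\Phi(\op _0(a)f)(x,\xi)|\,\omega _2(x,\xi)\;\lesssim\;\big(H*(|V_\phi f|\,\omega _1)\big)(x,\xi),
\]
where $H$ is obtained from $|V_\Psi a|\,\omega _0$ by taking the $L^\infty$-norm in the first $\rr {2d}$-block of variables and keeping the second; by construction $\nm{H}{L^1(\rr {2d})}\lesssim\nm{a}{M^{\infty ,1}_{(\omega _0)}}$. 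It remains to apply a Young-type inequality on $L^{\mabfp}_{E}$: convolution with an $L^1(\rr {2d})$ function maps $L^{\mabfp}_{E}$ to itself with norm $\lesssim$ the $L^1$-norm of the kernel, for every $\mabfp\in(0,\infty]^{2d}$, including the range where some $p_i<1$ — one reduces, slice by slice along the phase-split basis and using translation invariance and solidity, to the one-dimensional fact that convolution by an $L^1$ function is bounded on $L^p(\mathbf R)$ for all $p\in(0,\infty]$. Combining the last two displays with the window-independence of $M(L^{\mabfp,E},\omega _2)$ gives $\nm{\op _0(a)f}{M(L^{\mabfp,E},\omega _2)}\lesssim\nm{a}{M^{\infty ,1}_{(\omega _0)}}\nm{f}{M(L^{\mabfp,E},\omega _1)}$, and the general $a\in M^{\infty ,1}_{(\omega _0)}$, $f\in M(L^{\mabfp,E},\omega _1)$ are reached by the usual density arguments in $\Sigma _1'$.

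The main obstacle is the first step: setting up the STFT calculus identity so that the four phase-space blocks appear exactly in the combination $(x,\eta,\xi-\eta,y-x)$ matching \eqref{Eq:WeightfracCond1}, which requires a careful affine change of variables absorbing the Kohn--Nirenberg (rather than Weyl) normalization of $\op _0$. The second delicate point is the Young inequality on the quasi-Banach mixed spaces $L^{\mabfp}_{E}$ when some $p_i<1$: Minkowski's inequality is unavailable, so one must use that the convolving kernel $H$ lies in $L^1$ — not merely in $L^{\min(1,\mabfp)}$ — and keep track of the phase-split order of the basis so that the spatial and the frequency directions are convolved in the correct order.
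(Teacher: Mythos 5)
Your opening move is exactly what the paper itself does: Proposition \ref{Prop:OpCont} is stated there without proof, as a direct specialization of \cite[Theorem 3.1]{To24}, and verifying from Definition \ref{Def:MixedLebSpaces} that $L^{\mabfp}_{E}(\rr {2d})$ is a solid, translation invariant QBF-space of order $\min (1,\mabfp )$ (with $v=1$) is all that the citation requires. Had you stopped after the first paragraph, the proposal would match the paper.

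The self-contained sketch you then substitute for the citation has a genuine gap, and it sits precisely in the quasi-Banach range that makes the statement nontrivial. The ``one-dimensional fact'' you reduce to --- that convolution by an $L^1$ function is bounded on $L^p(\mathbf R)$ for all $p\in (0,\infty ]$ --- is false when $p<1$: take $f=\chi _{[0,1]}\in L^1$ and $g_\ep =\ep ^{-1/p}\chi _{[0,\ep ]}$, so that $\nm {g_\ep}{L^p}=1$ while $(f*g_\ep )(x)=\ep ^{1-1/p}$ for $x\in [\ep ,1]$, whence $\nm {f*g_\ep}{L^p}\ge \ep ^{1-1/p}(1-\ep )^{1/p}\to \infty$ as $\ep \to 0$. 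So the ``Young-type inequality on $L^{\mabfp}_{E}$ with an $L^1$ kernel'' cannot be obtained slice by slice, and your remark that it suffices for $H$ to lie in $L^1$ ``not merely in $L^{\min (1,\mabfp )}$'' points in the wrong direction: for exponents below $1$ it is exactly $\min (1,\mabfp )$-type control of the kernel that is needed, since $\ell ^p*\ell ^p\subseteq \ell ^p$ for $p\le 1$ while $\ell ^1*\ell ^p\not \subseteq \ell ^p$ (convolve with a unit impulse). This is also visible inside the paper: in Proposition \ref{p1.4} the window must be taken in $M^r_{(v)}$ with $r\le \min (1,p,q)$, not merely in $M^1_{(v)}$. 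The actual arguments behind the quasi-Banach results (cf. \cite{GaSa,To20,To24}) therefore do not run through continuous convolution domination alone; they discretize by Gabor frames, or equivalently exploit the reproducing inequality $|V_\phi f|\lesssim |V_\phi f|*|V_\phi \phi |$ to pass to Wiener-amalgam/sequence norms, and then extract the required $\ell ^{\min (1,\mabfp )}$-summability of the discretized kernel from the structure of $V_\Psi a$ and $\omega _0$ rather than from $\nm H{L^1}$ alone. Your STFT domination identity and the weight transfer via \eqref{Eq:WeightfracCond1} are fine, and together with Minkowski's inequality \eqref{Eq:MinkIneq} they do prove the Banach case $\min \mabfp \ge 1$; but as written the sketch does not establish the proposition for general $\mabfp \in (0,\infty ]^{2d}$, so if you want to go beyond citing \cite{To24} you must supply the discretization step.
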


\par

In the next section we discuss continuity for pseudo-differential
operators with symbols in the following definition. (See also the introduction.)

\par

\begin{defn}\label{Def:GevSymbols}
Let $\omega _0$ be a weight on $\rr {d}$, and let $s\ge 0$.
\begin{enumerate}
\item The set $\Gamma ^{(\omega _0)}_{s}(\rr d)$ consists of all $a\in C^\infty (\rr d)$
such that
\begin{equation}\label{Eq:GammaDef2}
|\partial ^\alpha f(x) | \lesssim h^{|\alpha |}\alpha !^s \omega _0(x),\qquad \alpha \in \nn d,
\end{equation}
for some constant $h>0$;

\vrum

\item The set $\Gamma ^{(\omega _0)}_{0,s}(\rr d)$ consists of all $a\in C^\infty (\rr d)$
such that \eqref{Eq:GammaDef2} holds for every $h>0$.
\end{enumerate}
\end{defn}

\par

\begin{rem}\label{Remark:Gammaclasses}
We have
$$
\mascP \subseteq \mascP _{E,s_1}^0\subseteq \mascP _{E,s_1}
\subseteq \mascP _{E,s_2}^0, \qquad s_2<s_1.
$$
Hence, despite that $\Gamma ^{(\omega _0)}_{0,s}\subseteq
\Gamma ^{(\omega _0)}_s\subseteq S^{(\omega _0)}$ holds for every
$\omega _0$,  we have
\begin{align*}
\Gamma ^{(\omega )}_{0,s} &\nsubseteq
\bigcup _{\omega _0\in \mascP} S^{(\omega _0)}
\intertext{for some $\omega \in \mascP _{E,s}^0$, and}
\Gamma ^{(\omega )}_{0,s} &\nsubseteq
\bigcup _{\omega _0\in \mascP _{E,s}^0} \Gamma ^{(\omega _0)}_{s}
\end{align*}
for some $\omega \in \mascP _{E,s}$.
\end{rem}

\par

\section{Continuity for pseudo-differential operators with symbols in
$\Gamma ^{(\omega )}_{s}$ and $\Gamma ^{(\omega )}_{0,s}$}\label{sec2}

\par

In this section we discuss continuity for operators in $\op (\Gamma _{s}^{(\omega
_0)})$ and $\op (\Gamma _{0,s}^{(\omega _0)})$ when acting on a general class
of modulation spaces. In
Theorem \ref{p3.2} below it is proved that if $\omega ,\omega
_0\in\mascP _{E,s}^0$, $A\in \GL (d,\mathbf R)$ and $a\in \Gamma _{0,s}^{(\omega _0)}$, then
$\op _A(a)$ is continuous from $M(\omega _0\omega ,\mascB )$ to
$M(\omega _0,\mascB )$. This gives an analogy to \cite[Theorem 3.2]{To14} in the framework
of operator theory and Gelfand-Shilov classes.

\par

We need some preparations before discussing these mapping properties. The following
result shows that for any weight in $\mascP _E$, there are equivalent weights that 
satisfy strong Gevrey regularity.

\par

\begin{prop}\label{Prop:EquivWeights}
Let $\omega \in \mascP _E(\rr d)$ and $s >0$.
Then there exists a weight $\omega _0\in \mascP _E(\rr d)\cap C^\infty (\rr d)$
such that the following is true:
\begin{enumerate}
\item $\omega _0\asymp \omega $;

\vrum

\item $|\partial ^{\alpha}\omega _0(x)|\lesssim \omega _0(x) h^{|\alpha |}\alpha !^s
\asymp \omega (x) h^{|\alpha |}\alpha !^s$ for every $h>0$.
\end{enumerate}
\end{prop}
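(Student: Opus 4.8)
The plan is to regularize $\omega$ by convolution with a Gevrey-class mollifier, and then verify that the resulting smooth weight is both equivalent to $\omega$ and satisfies the desired derivative estimates. First I would choose a nonnegative function $\psi \in \Sigma_1(\rr d)$ (in fact in $\maclS_s^s(\rr d)$, which is nonempty since $s\ge 1$; and when $s<1$ one simply takes $\psi \in \Sigma_1(\rr d)$, which suffices because $\mascP_{E,s}=\mascP_E$ there) with $\int \psi\,dx = 1$ and with support — or at least very strong decay — concentrated near the origin. The natural candidate for the regularized weight is
\begin{equation*}
\omega_0(x) = (\omega * \psi)(x) = \int_{\rr d} \omega(x-y)\psi(y)\,dy.
\end{equation*}
Smoothness of $\omega_0$ is immediate since one may differentiate under the integral sign, and $\partial^\alpha \omega_0 = \omega * \partial^\alpha \psi$.

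For part (1), I would use the moderateness of $\omega$: since $\omega\in\mascP_E(\rr d)$, there is a submultiplicative weight $v$ with $\omega(x-y)\lesssim \omega(x)v(y)$ and $\omega(x)\lesssim \omega(x-y)v(y)$, hence $v(-y)^{-1}\omega(x)\lesssim \omega(x-y)\lesssim v(y)\omega(x)$. Multiplying by $\psi(y)\ge 0$ and integrating gives $\omega_0(x) \asymp \omega(x)$, provided $\int v(y)\psi(y)\,dy < \infty$; this finiteness is guaranteed once $\psi$ is chosen to decay faster than $v^{-1}$, which — using $v(y)\lesssim e^{r|y|^{1/s}}$ from \eqref{Eq:ModWeightPropCons} — is exactly what membership of $\psi$ in $\maclS_s^s$ (or $\Sigma_1$ when $s<1$) delivers. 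For part (2), $|\partial^\alpha \omega_0(x)| = |(\omega * \partial^\alpha \psi)(x)| \le \int \omega(x-y)|\partial^\alpha\psi(y)|\,dy \lesssim \omega(x)\int v(y)|\partial^\alpha\psi(y)|\,dy$, again by moderateness. It then remains to bound $\int v(y)|\partial^\alpha\psi(y)|\,dy$ by $Ch^{|\alpha|}\alpha!^s$ for every $h>0$ with $C=C(h)$; this follows from the Gevrey estimate defining $\maclS_s^s$, namely $|\partial^\alpha\psi(y)| \lesssim h_0^{|\alpha|}\alpha!^s \langle y\rangle^{-N}e^{-r|y|^{1/s}}$-type bounds, absorbing the factor $v(y)\lesssim e^{r'|y|^{1/s}}$ by choosing $\psi$ with decay governed by a large enough exponential rate. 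The passage from "for some $h_0$" to "for every $h$" uses that one is free to rescale $\psi$ (replace $\psi(y)$ by $\lambda^d\psi(\lambda y)$), or more simply that for a Beurling-type choice $\psi\in\Sigma_s^s$ the Gevrey constant is arbitrary; since $\Sigma_s^s \ne \{0\}$ precisely when $s\ge 1$ and $(s,s)\ne(\tfrac12,\tfrac12)$, i.e. when $s>\tfrac12$, one takes $\psi\in\Sigma_s^s$ for $s>\tfrac12$ and handles $0<s\le\tfrac12$ directly from $\mascP_{E,s}=\mascP_E$ together with a fixed $\Sigma_1$ mollifier. Finally the equivalence $\omega(x)h^{|\alpha|}\alpha!^s \asymp \omega_0(x)h^{|\alpha|}\alpha!^s$ in the displayed estimate of (2) is just part (1).

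The main obstacle I expect is the bookkeeping of constants in the "for every $h>0$" clause: one must ensure that the single mollifier $\psi$ can be chosen so that, simultaneously, (a) $\int v\,\psi < \infty$ for the relevant moderateness weight $v$, and (b) the convolution $\omega * \partial^\alpha\psi$ inherits Gevrey estimates with arbitrarily small constant $h$. The clean way to do this is to fix $\psi$ in the Beurling Gelfand–Shilov space $\Sigma_s^s(\rr d)$ with $\int\psi = 1$ (legitimate for $s>\tfrac12$), so that $|\partial^\alpha\psi(y)| \le C_h\, h^{|\alpha|}\alpha!^s\, e^{-r_0|y|^{1/s}}$ holds for \emph{every} $h>0$ and \emph{every} $r_0>0$ with appropriate $C_h = C_{h,r_0}$; choosing $r_0$ larger than the moderateness rate $r$ of $v$ then makes $\int v(y)|\partial^\alpha\psi(y)|\,dy \le C_h' h^{|\alpha|}\alpha!^s$, which is exactly (2). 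I would also remark that one may arrange $\psi$ even, so no asymmetry issues arise, and that the two displayed asymptotic relations in (2) are connected by part (1) with no further work.
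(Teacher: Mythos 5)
Your basic strategy --- convolve $\omega$ with a nonnegative Gevrey mollifier and use moderateness in both directions --- is the same as the paper's, and for $\tfrac12<s\le 1$ your argument is essentially correct. But as written it has two genuine gaps. First, for $s>1$ you invoke $v(y)\lesssim e^{r|y|^{1/s}}$ ``from \eqref{Eq:ModWeightPropCons}''; that estimate holds for a general $\omega\in\mascP _E(\rr d)$ only when $s\le 1$ (it is precisely the statement $\omega\in\mascP _{E,s}$, and e.g. $\omega (x)=e^{|x|}$ lies in $\mascP _E$ but is not $e^{r|\cdot |^{1/s}}$-moderate for any $r$ when $s>1$). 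Since the proposition assumes only $\omega\in\mascP _E$, the moderateness weight is $e^{r|y|}$ with exponent $1$, while a mollifier in $\maclS _s^s$ or $\Sigma _s^s$ with $s>1$ decays only like $e^{-r_0|y|^{1/s}}$; hence $\int v(y)|\partial ^\alpha \psi (y)|\, dy=\infty$ no matter how large $r_0$ is (and rescaling $\psi$ changes the rate, not the exponent), so your construction breaks for $s>1$. This case is repairable --- since $\alpha !^{s'}\le \alpha !^{s}$ for $s'\le s$ it suffices to prove the claim for a smaller Gevrey index --- but you never make that reduction and instead rely on the false bound. Second, and more seriously, your treatment of $0<s\le \tfrac12$ does not prove (2): a mollifier $\psi \in \Sigma _1$ only yields $|\partial ^\alpha \omega _0|\lesssim h^{|\alpha |}\alpha !\, \omega$, i.e. Gevrey order $1$, whereas (2) demands $\alpha !^{s}$ with the given $s\le \tfrac12$; the observation that $\mascP _{E,s}=\mascP _E$ concerns the weight class, not the order of the derivative estimates, so it cannot close this gap, and the symmetric space $\Sigma _s^s$ is of no help since it is trivial for $s\le \tfrac12$.

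The paper resolves both points with two devices missing from your proposal. It first reduces to $s<\tfrac12$ and to proving (2) for \emph{some} $h>0$: if $|\partial ^\alpha \omega _0|\lesssim h_0^{|\alpha |}\alpha !^{s'}\omega _0$ for some $h_0$ and some $s'<s$, then $(h_0/h)^{|\alpha |}\alpha !^{s'-s}$ is bounded for every $h>0$, which upgrades the estimate to the stated one for the original $s$ with every $h$. It then chooses the mollifier anisotropically: $\phi =|\phi _0|^2$ with $\phi _0\in \Sigma _{1-s}^{s}(\rr d)\setminus 0$ (nontrivial because $s+(1-s)=1$ and $s<\tfrac12$), so that $|\partial ^\alpha \phi (x)|\lesssim h^{|\alpha |}\alpha !^{s}e^{-c|x|^{1/(1-s)}}$ for every $h,c>0$. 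The decay exponent $1/(1-s)>1$ is super-exponential, so it absorbs the exponential moderateness $e^{r|x-y|}$ of an arbitrary $\omega \in \mascP _E$, while the derivative bounds carry the correct factor $\alpha !^{s}$ even for small $s$. If you add the reduction to small $s$ with ``some $h$'' and replace $\Sigma _s^s$ by $\Sigma _{1-s}^{s}$ (keeping your nonnegativity and two-sided moderateness argument for (1), which is fine), your proof becomes complete.
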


\par

\begin{proof}
We may assume that $s<\frac 12$. It suffices to prove that (2) should hold for some $h>0$.
Let $\phi _0\in \Sigma _{1-s}^s(\rr d)\setminus 0$, and let
$\phi =|\phi _0|^2$. Then $\phi \in \Sigma _{1-s}^s(\rr d)$ is non-negative.
In particular,
$$
|\partial ^\alpha \phi (x)| \lesssim h^{|\alpha |}\alpha !^s e^{-c|x|^{\frac 1{1-s}}},
$$
for every $h>0$ and $c>0$. We set $\omega _0=\omega *\phi$.

\par

Then
\begin{align*}
|\partial ^\alpha \omega _0(x)| &= \left | \int \omega (y)(\partial ^\alpha \phi )(x-y)
\, dy  \right |
\\[1ex]
&\lesssim 
h^{|\alpha |}\alpha !^s \int \omega (y)e^{-c|x-y|^{\frac 1{1-s}}} \, dy
\\[1ex]
&\lesssim 
h^{|\alpha |}\alpha !^s \int \omega (x+(y-x))e^{-c|x-y|^{\frac 1{1-s}}} \, dy
\\[1ex]
&\lesssim 
h^{|\alpha |}\alpha !^s \omega (x) \int e^{-\frac c2|x-y|^{\frac 1{1-s}}} \, dy
\asymp h^{|\alpha |}\alpha !^s \omega (x),
\end{align*}
where the last inequality follows \eqref{Eq:ModWeightProp} and the fact that
$\phi$ is bounded by a super exponential function. This gives the first part of (2).

\par

The equivalences in (1) follows in the same way as in e.{\,}g. \cite{To18}. More precisely,
by \eqref{Eq:ModWeightProp} we have
\begin{align*}
\omega _0(x) &= \int \omega (y)\phi (x-y)\, dy = \int \omega (x+(y-x))\phi (x-y)\, dy
\\[1ex]
&\lesssim \omega (x) \int e^{c|x-y|}\phi (x-y)\, dy \asymp \omega (x).
\end{align*}
In the same way, \eqref{Eq:ModWeightPropCons} gives
\begin{align*}
\omega _0(x) &= \int \omega (y)\phi (x-y)\, dy = \int \omega (x+(y-x))\phi (x-y)\, dy
\\[1ex]
&\gtrsim \omega (x) \int e^{-c|x-y|}\phi (x-y)\, dy \asymp \omega (x),
\end{align*}
and (1) as well as the second part of (2) follow.
\end{proof}

\par

The next result shows that $\Gamma ^{(\omega )}_{s}$ and
$\Gamma ^{(\omega )}_{0,s}$ can be characterised in terms of estimates of short-time
Fourier transforms. 

\par

\begin{prop}\label{Prop:CharGammaSTFT}
Let $s\ge 1$, $\phi \in \maclS _s(\rr d)\setminus 0$, and let
$f\in \maclS '_{1/2}(\rr d)$. Then the following is true:
\begin{enumerate}
\item If $\omega \in \mascP _{E,s}^0(\rr d)$, then $f\in C^\infty (\rr d)$ and satisfies
\begin{equation}\label{GelfRelCond1}
|\partial ^\alpha f(x)|\lesssim \omega (x)h^{|\alpha |}\alpha !^s,
\end{equation}
for some $h>0$, if and only if
\begin{equation}\label{stftcond1}
|V_\phi f(x,\xi )|\lesssim \omega (x)e^{-r|\xi |^{\frac 1s}},
\end{equation}
for some $r>0$;

\vrum

\item If $\omega \in \mascP _{E,s}(\rr d)$ and in addition $\phi \in \Sigma _s(\rr d)$,
then $f\in C^\infty (\rr d)$ and satisfies \eqref{GelfRelCond1} 
for every $h>0$ (resp. for some $h>0$), if and only if
\eqref{stftcond1} holds true for every $r>0$ (resp. for some $r>0$).
\end{enumerate}
\end{prop}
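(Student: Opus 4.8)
The plan is to prove Proposition \ref{Prop:CharGammaSTFT} by translating the pointwise derivative estimates on $f$ into decay estimates on $V_\phi f$ and back, using the standard relations between derivatives, multiplications and the short-time Fourier transform together with the characterization of Gevrey--Gelfand-Shilov regularity by such transforms. Since the Roumieu case (1) and the Beurling case (2) are entirely parallel (one replaces ``for some $h>0$'' by ``for every $h>0$'' and ``for some $r>0$'' by ``for every $r>0$'', and strengthens the window to $\Sigma_s$), I would carry out the argument once with the quantifiers kept symbolic, and only comment on the bookkeeping differences at the end.

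First I would prove the implication \eqref{GelfRelCond1} $\Rightarrow$ \eqref{stftcond1}. Assume $f\in C^\infty(\rr d)$ satisfies $|\partial^\alpha f(x)|\lesssim \omega(x) h^{|\alpha|}\alpha!^s$. Fix the window $\phi\in\maclS_s(\rr d)$, so that $|\partial^\beta\phi(y)|\lesssim k^{|\beta|}\beta!^s$ and also $|y^\gamma\phi(y)|\lesssim k^{|\gamma|}\gamma!^s$ for suitable $k$; in particular $\phi$ decays like $e^{-c|y|^{1/s}}$. Integrating by parts in the defining integral $V_\phi f(x,\xi)=(2\pi)^{-d/2}\int f(y)\overline{\phi(y-x)}e^{-i\scal y\xi}\,dy$, each factor of $\xi^\alpha$ can be traded for $\alpha$ derivatives falling on $f(y)\overline{\phi(y-x)}$; using the Leibniz rule, the Gevrey bound on $f$ (with the argument shift $\omega(y)\lesssim \omega(x)e^{c|y-x|^{1/s}}$ coming from \eqref{Eq:ModWeightPropCons}), and the super-exponential decay of $\phi$ to absorb the $e^{c|y-x|^{1/s}}$ factor and carry out the $y$-integration, one gets $|\xi^\alpha V_\phi f(x,\xi)|\lesssim \omega(x)\,h_1^{|\alpha|}\alpha!^s$ for a new constant $h_1$. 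Optimizing this polynomial-type bound over $\alpha$ in the usual way (balancing $|\xi|^{|\alpha|}$ against $h_1^{|\alpha|}\alpha!^s$ and using Stirling) yields exactly $|V_\phi f(x,\xi)|\lesssim\omega(x)e^{-r|\xi|^{1/s}}$ for some $r>0$ (for every $r>0$ in the Beurling case, since then $h_1$ may be taken arbitrarily small). This is essentially the $\rr{2d}$-version of the known Fourier-transform characterization of Gevrey classes, localized by the compactly-supported-in-nature window.

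Next I would prove the converse \eqref{stftcond1} $\Rightarrow$ \eqref{GelfRelCond1}. Here I would use the inversion formula: with $\phi,\psi\in\maclS_s(\rr d)$ normalized so that $(\psi,\phi)=(2\pi)^{-d/2}$, one has $f(y)=(2\pi)^{-d/2}\iint V_\phi f(x,\xi)\,\psi(y-x)e^{i\scal y\xi}\,dx\,d\xi$, valid first for $f\in\maclS_s$ and then, since $f\in\maclS'_{1/2}$ is assumed, by the usual duality/continuity extension of the STFT reconstruction to the relevant ultradistribution space. Differentiating $\alpha$ times under the integral brings down powers $\xi^\beta$ (bounded by the decay $e^{-r|\xi|^{1/s}}$ of $V_\phi f$, which beats $|\xi|^{|\beta|}$ at the cost of a factor $C h^{|\beta|}\beta!^s$ after optimization) and derivatives of $\psi$ (bounded by $k^{|\gamma|}\gamma!^s$). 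Using $\omega(x)\lesssim\omega(y)e^{c|x-y|^{1/s}}$ and the super-exponential decay of $\psi$ to do the $x$-integration, and the integrability of $e^{-r|\xi|^{1/s}}|\xi|^{|\alpha|}$ after extracting the Gevrey factor, one obtains $|\partial^\alpha f(y)|\lesssim\omega(y)h^{|\alpha|}\alpha!^s$, with $h$ small when $r$ is large, which gives the Beurling quantifier matching. The smoothness $f\in C^\infty$ is part of the output of this representation. The main obstacle, and the point requiring the most care, is justifying the reconstruction formula and the differentiation under the integral sign at the level of $\maclS'_{1/2}$ (not merely $\maclS'_s$), and keeping the Gevrey constants correctly tracked through the repeated use of Leibniz's rule and the optimization step so that the ``for some'' versus ``for every'' dichotomy comes out right; the weight shifts via \eqref{Eq:ModWeightPropCons} are routine but must be paired with the correct $\maclS_s$ (Roumieu) or $\Sigma_s$ (Beurling) window so that the compensating exponential factors are genuinely absorbable.

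Finally, for part (2) I would simply rerun the two implications above with $\phi\in\Sigma_s(\rr d)$ (so that $|\partial^\beta\phi(y)|\lesssim k^{|\beta|}\beta!^s$ holds for \emph{every} $k>0$, and $\phi$ decays like $e^{-c|y|^{1/s}}$ for every $c>0$), tracking that the constant $h$ in \eqref{GelfRelCond1} can be taken arbitrarily small precisely when $r$ in \eqref{stftcond1} can be taken arbitrarily large, and noting that now $\omega\in\mascP_{E,s}$ only requires $v_r$-moderateness for some $r>0$, which is still enough to make the weight shift absorbable by the for-every-$c$ decay of the window. The ``for some $h>0$'' variant of (2) follows by the same computation restricting attention to a single fixed $h$ (and correspondingly a single $r$), so no separate argument is needed there.
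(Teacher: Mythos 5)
Your proposal is correct and follows essentially the same route as the paper: the forward implication is the paper's estimate on $F_x(y)=f(y+x)\overline{\phi(y)}$ (Leibniz plus the weight shift \eqref{Eq:ModWeightPropCons} and the window's $e^{-c|\cdot|^{1/s}}$ decay), with your integration-by-parts-and-optimization step just making explicit the passage from Gevrey bounds on $F_x$ to exponential decay of $\widehat{F_x}=V_\phi f(x,\cdot)$; the converse is the same STFT inversion-formula argument with differentiation under the integral and the bound $|\eta^\beta|e^{-r|\eta|^{1/s}}\lesssim h^{|\beta|}\beta!^s e^{-r|\eta|^{1/s}/2}$, including the correct pairing of quantifiers ($h$ small $\leftrightarrow$ $r$ large) and of window class ($\maclS_s$ vs.\ $\Sigma_s$) with weight class ($\mascP_{E,s}^0$ vs.\ $\mascP_{E,s}$).
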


\par


\par

\begin{proof}
We shall follow the proof of Proposition 3.1 in \cite{CaTo}. We only prove (2), and
then when \eqref{GelfRelCond1} or \eqref{stftcond1}
are true for every $\ep >0$. The other cases follow by similar arguments and
are left for the reader.

\par

Assume that $\phi \in \Sigma _s(\rr d)$, $\omega \in \mascP _{E,s}(\rr d)$
and that \eqref{GelfRelCond1} holds for every $\ep >0$. Then for every
$x \in \rr d$ the function
$$
y\mapsto F_x(y)\equiv f(y+x)\overline{\phi (y)}
$$
belongs to $\Sigma _s$, and $\omega (x+y)\lesssim e^{h_0|y|^{\frac 1s}}\omega (x)$ for some
$h_0>0$. By a straight-forward application of Leibnitz formula and the facts that
$$
|\partial ^\alpha \phi (x)|\lesssim \ep ^{|\alpha |}\alpha !^s e^{-h|x|^{\frac 1s}}
\quad \text{and}\quad
\omega (x+y)\lesssim \omega (x)e^{h_0|y|^{\frac 1s}}
$$
for some $h_0>0$ and every $\ep ,h>0$ we get
$$
|\partial _y^\alpha F_x(y)| \lesssim \omega (x)e^{-h|y|^{\frac 1s}}
\ep ^{|\alpha |}\alpha !^s,
$$
for every $\ep , h >0$. In particular,
\begin{equation}\label{FxEsts}
|F_x(y)|\lesssim \omega (x)e^{-h|y|^{\frac 1s}}\quad \text{and}\quad
|\widehat F_x(\xi )|\lesssim \omega (x)e^{-h|\xi |^{\frac 1s}},
\end{equation}
for every $h>0$. Since $|V_\phi f(x,\xi )| = |\widehat F_x(\xi )|$, the estimate
\eqref{stftcond1} follows from the second inequality in \eqref{FxEsts}. This
shows that if \eqref{GelfRelCond1} holds for every $\ep >0$, then \eqref{stftcond1}
holds for every $\ep >0$.

\par

Next suppose that \eqref{stftcond1} holds for every $\ep >0$. By
Fourier's inversion formula we get
$$
f(x) = (2\pi)^{-\frac d2} \nm \phi{L^2}^{-2} \iint _{\rr {2d}} V_\phi f(y,\eta )
\phi (x-y)e^{i\scal x\eta}\, dyd\eta .
$$
By differentiation and the fact that $\phi \in \Sigma _s$ we get
\begin{multline*}
|\partial ^\alpha f(x)| \asymp \left |
\sum _{\beta \le \alpha} {\alpha \choose \beta} i^{|\beta|}
\iint _{\rr {2d}} \eta ^\beta V_\phi f(y,\eta )  (\partial ^{\alpha -\beta }\phi )(x-y)
e^{i\scal x\eta}\, dyd\eta
\right |
\\[1ex]
\le
\sum _{\beta \le \alpha} {\alpha \choose \beta}
\iint _{\rr {2d}} |\eta ^\beta V_\phi f(y,\eta )  (\partial ^{\alpha -\beta }\phi )(x-y)|
\, dyd\eta
\\[1ex]
\lesssim
\sum _{\beta \le \alpha} {\alpha \choose \beta}
\iint _{\rr {2d}} |\eta ^\beta \omega (y)e^{-\ep _3|\eta |^{\frac 1s}} 
(\partial ^{\alpha -\beta }\phi )(x-y)|
\, dyd\eta
\\[1ex]
\lesssim
\sum _{\beta \le \alpha} {\alpha \choose \beta}
\ep _2 ^{|\alpha - \beta |} (\alpha -\beta )!^s
\iint _{\rr {2d}} |\eta ^\beta |\omega (y)e^{-\ep _3|\eta |^{\frac 1s}}
e^{-\ep _1|x-y|^{\frac 1s}} \, dyd\eta ,
\end{multline*}
for every $\ep _1,\ep _2,\ep _3>0$. Since
$$
|\eta ^\beta e^{-\ep _3|\eta |^{\frac 1s}}|\lesssim \ep _2^{|\beta|}\beta !^s
e^{-\ep _3|\eta |^{\frac 1s}/2},
$$
when $\ep _3$ is chosen large enough compared to $\ep _2^{-1}$, we get
\begin{multline*}
|\partial ^\alpha f(x)|
\\[1ex]
\lesssim \ep _2 ^{|\alpha |}
\sum _{\beta \le \alpha} {\alpha \choose \beta}
(\beta !
(\alpha -\beta )!)^s \iint _{\rr {2d}} \omega (y)e^{-\ep _3|\eta |^{\frac 1s}/2} 
e^{-\ep _1|x-y|^{\frac 1s}} \, dyd\eta
\\[1ex]
\lesssim (2\ep _2)^{|\alpha |}\alpha !^s \int _{\rr n} \omega (y)
e^{-\ep _1|x-y|^{\frac 1s}} \, dy
\end{multline*}

\par

Since $\omega (y)\le \omega (x)e^{h_0|x-y|^{\frac 1s}}$ for some $h_0\ge 0$
and $\ep _1$ can be chosen arbitrarily large, it follows from the last estimate that
$$
|\partial ^\alpha f(x)| \lesssim (2\ep _2)^{|\alpha |}\alpha !^s
\omega (x),
$$
for every $\ep _2>0$, and the result follows.
\end{proof}

\par

The following result is now a straight-forward consequence of the previous proposition
and the definitions.

\par

\begin{prop}\label{Prop:GammaModIdent}
Let $s\ge 1$, $q\in (0,\infty ]$, $\omega _0\in \mascP _{E,s}(\rr d)$ and let $\omega _r(x,\xi )
= \omega _0(x)e^{-r|\xi |^{\frac 1s}}$ when $x,\xi \in \rr d$. Then
$$
\bigcup _{r>0}M^{\infty ,q}_{(1/\omega _r)}(\rr d) = \Gamma ^{(\omega _0)}_{s}(\rr d)
\quad \text{and}\quad
\bigcap _{r>0}M^{\infty ,q}_{(1/\omega _r)}(\rr d) = \Gamma ^{(\omega _0)}_{0,s}(\rr d).
$$
\end{prop}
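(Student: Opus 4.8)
The plan is to reduce both identities to the case $q=\infty$, where they become exactly the short-time Fourier transform description of $\Gamma ^{(\omega _0)}_s$ and $\Gamma ^{(\omega _0)}_{0,s}$ given by Proposition \ref{Prop:CharGammaSTFT}. Fix once and for all a window $\phi \in \Sigma _s(\rr d)\setminus 0$, which exists since $s\ge 1>\frac 12$ and which also lies in $\maclS _s(\rr d)\setminus 0$. Since $\omega _0\in \mascP _{E,s}(\rr d)$ and $t\mapsto t^{1/s}$ is subadditive for $s\ge 1$, the weight $(1/\omega _r)(x,\xi )=e^{r|\xi |^{1/s}}/\omega _0(x)$ lies in $\mascP _{E,s}(\rr {2d})\subseteq \mascP _E(\rr {2d})$ and is $v$-moderate for a submultiplicative $v$ with $v(x,\xi )\asymp e^{h_0|x|^{1/s}+r|\xi |^{1/s}}$, where $\omega _0$ is $e^{h_0|\cdot |^{1/s}}$-moderate. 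As $\phi \in \Sigma _s$, the transform $V_\phi \phi$ decays faster than every $e^{-c(|x|^{1/s}+|\xi |^{1/s})}$, so $\phi \in M^{\rho }_{(v)}(\rr d)$ for every $\rho \in (0,1]$; hence, by Proposition \ref{p1.4}, $M^{\infty ,q}_{(1/\omega _r)}(\rr d)$ is well defined and window independent for all $r>0$, with
\[
\nm f{M^{\infty ,q}_{(1/\omega _r)}}
=\Big ( \int _{\rr d}\big (G_f(\xi )\, e^{r|\xi |^{1/s}}\big )^q\, d\xi \Big )^{1/q},
\qquad
G_f(\xi ):=\sup _{x\in \rr d}\frac {|V_\phi f(x,\xi )|}{\omega _0(x)}
\]
(with the usual modification when $q=\infty$).

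To reduce to $q=\infty$, observe that Proposition \ref{p1.4}(2) gives $M^{\infty ,q}_{(1/\omega _r)}\subseteq M^{\infty ,\infty }_{(1/\omega _r)}$ for every $r>0$, while if $0<r'<r$ and $f\in M^{\infty ,\infty }_{(1/\omega _r)}$ then $G_f(\xi )\lesssim e^{-r|\xi |^{1/s}}$, whence $\nm f{M^{\infty ,q}_{(1/\omega _{r'})}}\lesssim \nm {e^{(r'-r)|\cdot |^{1/s}}}{L^q(\rr d)}<\infty$ since $r'-r<0$, i.e. $M^{\infty ,\infty }_{(1/\omega _r)}\subseteq M^{\infty ,q}_{(1/\omega _{r'})}$. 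Feeding these inclusions into the union and the intersection over $r>0$ yields
\[
\bigcup _{r>0}M^{\infty ,q}_{(1/\omega _r)}=\bigcup _{r>0}M^{\infty ,\infty }_{(1/\omega _r)}
\quad \text{and}\quad
\bigcap _{r>0}M^{\infty ,q}_{(1/\omega _r)}=\bigcap _{r>0}M^{\infty ,\infty }_{(1/\omega _r)}.
\]

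Finally, for $q=\infty$ the condition $f\in M^{\infty ,\infty }_{(1/\omega _r)}$ means precisely $|V_\phi f(x,\xi )|\lesssim \omega _0(x)e^{-r|\xi |^{1/s}}$, which is estimate \eqref{stftcond1} with $\omega =\omega _0$. Therefore $\bigcup _{r>0}M^{\infty ,\infty }_{(1/\omega _r)}$ (resp. $\bigcap _{r>0}M^{\infty ,\infty }_{(1/\omega _r)}$) consists of those $f\in \maclS '_{1/2}(\rr d)$ for which \eqref{stftcond1} holds for some (resp. every) $r>0$; since $\omega _0$ is of at most exponential growth, the classes $\Gamma ^{(\omega _0)}_s$ and $\Gamma ^{(\omega _0)}_{0,s}$ also lie in $\maclS '_{1/2}(\rr d)$, so Proposition \ref{Prop:CharGammaSTFT}(2) (applicable since $\phi \in \Sigma _s$) identifies these sets with the $f\in C^\infty (\rr d)$ obeying $|\partial ^\alpha f|\lesssim h^{|\alpha |}\alpha !^s\omega _0$ for some (resp. every) $h>0$, that is, with $\Gamma ^{(\omega _0)}_s(\rr d)$ (resp. $\Gamma ^{(\omega _0)}_{0,s}(\rr d)$). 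This proves both identities.

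The computations are elementary once the setup is in place; the one point deserving attention is that a \emph{single} window must be admissible for the whole family $\{ M^{\infty ,q}_{(1/\omega _r)}\} _{r>0}$ at once, which forces the choice of a Beurling-type window $\phi \in \Sigma _s$ even though each scaling weight $\omega _r$ only has Roumieu-type regularity, together with the verification of the two weight inclusions $M^{\infty ,\infty }_{(1/\omega _r)}\subseteq M^{\infty ,q}_{(1/\omega _{r'})}\subseteq M^{\infty ,\infty }_{(1/\omega _{r'})}$ for $r'<r$. In particular the reproducing formula for $V_\phi$ is not needed here, because passing to the union and intersection over $r$ absorbs the $q$-dependence.
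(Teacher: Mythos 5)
Your proof is correct and follows exactly the route the paper intends: the paper states the proposition as a ``straight-forward consequence'' of Proposition \ref{Prop:CharGammaSTFT} and the definitions, and your argument is precisely that, with the omitted details (window admissibility via Proposition \ref{p1.4}, and the reduction of the union/intersection from general $q$ to $q=\infty$ through the nested embeddings $M^{\infty ,q}_{(1/\omega _r)}\subseteq M^{\infty ,\infty}_{(1/\omega _r)}\subseteq M^{\infty ,q}_{(1/\omega _{r'})}$ for $r'<r$) filled in correctly.
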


\par

The following lemma is a consequence of
Theorem 4.6 in \cite{CaTo}.
 
\par

\begin{lemma}\label{Somega}
Let $s\ge 1$ $\omega \in \mascP _{E}(\rr {2d})$, $A_1,A_2\in \GL (d,\mathbf R )$, and
that $a_1,a_2\in \Sigma _1'(\rr {2d})$ are such that
$\op _{A_1}(a_1)=\op _{A_2}(a_2)$. Then
\begin{alignat*}{3}
a_1 &\in \Gamma_{s}^{(\omega )}(\rr {2d})&\qquad &\Leftrightarrow &\qquad a_2
&\in \Gamma_{s}^{(\omega )}(\rr {2d}) 
\intertext{and}
a_1 &\in \Gamma_{0,s}^{(\omega )}(\rr {2d})&\qquad &\Leftrightarrow &\qquad a_2
&\in \Gamma_{0,s}^{(\omega )}(\rr {2d}) .
\end{alignat*}
\end{lemma}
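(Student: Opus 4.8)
The plan is to reduce the statement, via the change-of-quantization formula \eqref{pseudorelation}, to the invariance of the symbol classes $\Gamma^{(\omega)}_{s}(\rr{2d})$ and $\Gamma^{(\omega)}_{0,s}(\rr{2d})$ under the transition operators $e^{i\scal{BD_\xi}{D_x}}$ on $\Sigma_1'(\rr{2d})$. By \eqref{pseudorelation}, the hypothesis $\op_{A_1}(a_1)=\op_{A_2}(a_2)$ means exactly that $a_2=e^{i\scal{BD_\xi}{D_x}}a_1$ with $B=A_1-A_2$, and $(e^{i\scal{BD_\xi}{D_x}})^{-1}=e^{-i\scal{BD_\xi}{D_x}}$ is an operator of the same type. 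Hence it suffices to show that $e^{i\scal{BD_\xi}{D_x}}$ maps $\Gamma^{(\omega)}_{s}(\rr{2d})$ into itself, and $\Gamma^{(\omega)}_{0,s}(\rr{2d})$ into itself, for every real $d\times d$ matrix $B$; applying this with $B$ and with $-B$ then yields both equivalences.

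This invariance is essentially Theorem 4.6 in \cite{CaTo}, which I would invoke. The mechanism, which I would record for completeness, is the short-time Fourier transform characterisation of these classes: by Proposition \ref{Prop:CharGammaSTFT}, used with $2d$ in place of $d$, a window $\Phi\in\maclS_s(\rr{2d})\setminus 0$, and $\Phi\in\Sigma_s(\rr{2d})\setminus 0$ in the Beurling case, a symbol $a$ lies in $\Gamma^{(\omega)}_{s}(\rr{2d})$ (resp. $\Gamma^{(\omega)}_{0,s}(\rr{2d})$) precisely when $|V_\Phi a(X,\Xi)|\lesssim\omega(X)e^{-r|\Xi|^{\frac 1s}}$ for some (resp. every) $r>0$; equivalently, by Proposition \ref{Prop:GammaModIdent}, $\Gamma^{(\omega)}_{s}(\rr{2d})$ and $\Gamma^{(\omega)}_{0,s}(\rr{2d})$ are a union, respectively an intersection over $r>0$, of weighted modulation spaces $M^{\infty,q}_{(1/\omega_r)}(\rr{2d})$ with $\omega_r(X,\Xi)=\omega(X)e^{-r|\Xi|^{\frac 1s}}$, $X,\Xi\in\rr{2d}$ (for any fixed $q\in(0,\infty]$). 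Since $e^{i\scal{BD_\xi}{D_x}}$ is the Fourier multiplier on $\rr{2d}$ with the quadratic phase $e^{i\scal{B\eta}{y}}$, one checks that it spreads $V_\Phi a$ in the frequency variable $\Xi$ only sub-exponentially, so that an estimate of the above form is preserved with at worst a smaller $r$; this is the continuity statement of Theorem 4.6 in \cite{CaTo}, and together with the first paragraph it yields the lemma (the $\Gamma^{(\omega)}_{0,s}$ case by reading ``for some $r>0$'' as ``for every $r>0$'' throughout). If convenient, $\omega$ may first be replaced by the equivalent smooth Gevrey-regular weight from Proposition \ref{Prop:EquivWeights}.

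The only delicate point — the main obstacle — is the claimed spreading estimate for the short-time Fourier transform under the transition operators: one must verify quantitatively that decay of type $e^{-r|\Xi|^{\frac 1s}}$ survives, with some diminished rate, the action of the degree-two phase $e^{i\scal{B\eta}{y}}$, which is where $s\ge 1$ enters. This is supplied by Theorem 4.6 in \cite{CaTo}; the remainder of the argument is the formal reduction described above.
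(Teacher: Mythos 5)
Your proposal is correct and follows essentially the same route as the paper: the paper simply records the lemma as a consequence of Theorem 4.6 in \cite{CaTo}, i.e.\ the invariance of $\Gamma^{(\omega)}_{s}$ and $\Gamma^{(\omega)}_{0,s}$ under the transition operators $e^{i\scal{(A_1-A_2)D_\xi}{D_x}}$ arising from \eqref{pseudorelation}, which is exactly your reduction. Your additional sketch of the STFT characterisation behind that theorem is consistent with the cited result but is not needed beyond the citation.
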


\par

We have now the following result.

\par

\begin{thm}\label{p3.2}
Let $A\in \GL (d,\mathbf R)$, $s\ge 1$, $\omega ,\omega _0\in\mascP _{E,s}^0(\rr {2d})$,
$a\in \Gamma ^{(\omega _0)}_{s}(\rr {2d})$, and that $\mascB$
is an invariant BF-space on $\rr {2d}$. Then
$\op _A(a)$ is continuous from $M(\omega _0\omega ,\mascB )$
to $M(\omega ,\mascB )$.
\end{thm}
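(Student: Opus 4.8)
The plan is to transfer the problem, via the short-time Fourier transform (STFT), to a convolution estimate on $\mascB $: on the STFT side $\op _0(a)$ becomes an integral operator whose weighted kernel is dominated by convolution with an explicit super-exponentially decreasing function, after which solidity of $\mascB $ and Minkowski's inequality finish the argument. First I would normalise $A$ away: by \eqref{pseudorelation} there is $b\in \Sigma _1'(\rr{2d})$ with $\op _A(a)=\op _0(b)$, and by Lemma \ref{Somega} (applicable since $\mascP_{E,s}^0\subseteq \mascP_E$) we have $b\in \Gamma _s^{(\omega _0)}(\rr{2d})$, so it suffices to treat $\op _0(b)$. Fix a window $\phi \in \Sigma _1(\rr d)\setminus 0$, which also lies in $\maclS _s(\rr d)$ since $s\ge 1$, and let $\Psi $ be the associated window on $\rr{2d}$; since $M(\omega ,\mascB )$ is independent of the window (Proposition \ref{p1.4BF}) this costs nothing. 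By Proposition \ref{Prop:CharGammaSTFT}(1) there is $r>0$ with $|V_\Psi b(X,\Xi )|\lesssim \omega _0(X)e^{-r|\Xi |^{1/s}}$ for $X,\Xi \in \rr{2d}$, equivalently $b\in M^{\infty ,1}_{(1/\omega _r)}(\rr{2d})$ with $\omega _r(X,\Xi )=\omega _0(X)e^{-r|\Xi |^{1/s}}$ (Proposition \ref{Prop:GammaModIdent}).

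Next comes the STFT kernel. With the above windows one has the standard pointwise bound (this is the mechanism behind Proposition \ref{Prop:OpCont})
\[
|V_\phi (\op _0(b)f)(x,\xi )|\lesssim \iint |V_\Psi b(x,\eta ,\xi -\eta ,y-x)|\,|V_\phi f(y,\eta )|\,dy\,d\eta ,
\]
in which the arguments of $V_\Psi b$ are precisely those entering \eqref{Eq:WeightfracCond1}. Inserting the decay of $V_\Psi b$, multiplying by $\omega (x,\xi )$, and rewriting the integrand against $|V_\phi f(y,\eta )|\,\omega _0(y,\eta )\omega (y,\eta )$ leaves the weight quotient $\omega _0(x,\eta )\omega (x,\xi )/(\omega _0(y,\eta )\omega (y,\eta ))$. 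Since $\omega _0,\omega \in \mascP_{E,s}^0$ and $s\ge 1$, this quotient is bounded, \emph{for every} $\delta >0$, by $e^{\delta |(x-y,\xi -\eta )|^{1/s}}$; choosing $\delta $ with $c:=r-2\delta >0$ and using $|(\xi -\eta ,y-x)|=|(x-y,\xi -\eta )|$, the factor $e^{-r|\cdot |^{1/s}}$ from $V_\Psi b$ absorbs it, and the change of variables $(x,\xi ;y,\eta )\mapsto (x-y,\xi -\eta )$ turns the estimate into a genuine convolution on $\rr{2d}$:
\[
|V_\phi (\op _0(b)f)(X)|\,\omega (X)\lesssim \big(\Theta _c*(|V_\phi f|\,\omega _0\omega )\big)(X),\qquad \Theta _c(X)=e^{-c|X|^{1/s}} .
\]
Here $\Theta _c\in L^1_{(v)}(\rr{2d})$, where $v$ is a weight with respect to which $\mascB $ is translation invariant; this $v$-integrability — which forces the admissible growth of $v$ to be compatible with the Gevrey order $s$ — is where the full strength of the hypotheses enters.

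Finally, by solidity of $\mascB $ (Definition \ref{bfspaces1}(2)) and Minkowski's inequality for $\mascB $ (Proposition \ref{p1.4BFA}, via \eqref{Eq:MinkIneq}),
\[
\|\op _0(b)f\|_{M(\omega ,\mascB )}=\big\| |V_\phi (\op _0(b)f)|\,\omega \big\|_{\mascB }\lesssim \|\Theta _c\|_{L^1_{(v)}}\,\|f\|_{M(\omega _0\omega ,\mascB )},
\]
which is the assertion (and, incidentally, recovers Proposition \ref{Prop:OpCont} while extending it to arbitrary invariant $\mascB $). I expect the main obstacle to be the kernel step: setting up the STFT kernel identity with compatible Gelfand--Shilov windows and justifying the pointwise domination, and then checking that the mixed $M^{\infty ,1}$-type decay of $V_\Psi b$ in the frequency variables, together with the super-exponential slack left by the $\mascP_{E,s}^0$-moderation of $\omega _0$ and $\omega $, is enough both to absorb the weight quotient and to yield a kernel in $L^1_{(v)}$. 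The companion Theorem \ref{p3.2B} follows along the same lines with $\mascP_{E,s}$ and $\Gamma _{0,s}^{(\omega _0)}$ in place of $\mascP_{E,s}^0$ and $\Gamma _s^{(\omega _0)}$, now using that $b$ lies in $M^{\infty ,1}_{(1/\omega _r)}$ for \emph{every} $r>0$.
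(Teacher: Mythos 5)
Your argument is correct in outline, but it is a genuinely different route from the paper's own proof of Theorem \ref{p3.2}; it is in fact much closer to how the paper later obtains Theorem \ref{Thm:OpCont3} from Proposition \ref{Prop:OpCont}. You reduce everything to the standard off-diagonal STFT domination
$|V_\phi (\op _0(b)f)(x,\xi )|\lesssim \iint |V_\Psi b(x,\eta ,\xi -\eta ,y-x)|\,|V_\phi f(y,\eta )|\,dy\,d\eta$
(with $\Psi$ a Rihaczek-type window), feed in the decay $|V_\Psi b(X,\Xi )|\lesssim \omega _0(X)e^{-r|\Xi |^{1/s}}$ from Proposition \ref{Prop:CharGammaSTFT}, absorb the weight quotient using that $\omega ,\omega _0\in \mascP _{E,s}^0$ are $e^{\delta |\cdot |^{1/s}}$-moderate for every $\delta$, and finish with solidity plus Minkowski's inequality for a full convolution on $\rr {2d}$. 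The paper instead stays self-contained: it passes to the adjoint symbol $b$ with $\op (b)=\op (a)^*$, writes $V_\phi (\op (a)f)(x,\xi )=(2\pi )^{-d}(f,e^{i\scal \cdo \xi }H(x,\xi ,\cdo ))\omega (x,\xi )$, proves Gevrey-type decay of the kernel $H$ and factors it as $H=H_0\cdot \phi _0(\cdo -x)$ (Lemmas \ref{Lemma:PrepReThm3.2A} and \ref{Lemma:PrepReThm3.2}), so that $V_\phi (\op (a)f)$ becomes a convolution of $V_{\phi _0}f(x,\cdo )$ in the frequency variable only, against $e^{-r_0|\cdo |^{1/s}}\otimes \delta _0$; Minkowski's inequality then gives the bound. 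What each buys: your version is shorter, standard, and extends directly to the quasi-Banach mixed-norm setting, but its key ingredient — the domination formula above — is not proved in this paper (it is the mechanism behind Proposition \ref{Prop:OpCont}, established in \cite{To24}), so to be complete you would have to set it up with admissible Gelfand--Shilov windows and justify it for $f\in \Sigma _1'$; the paper's version avoids that import at the cost of the two preparation lemmas, and only needs translation invariance/Minkowski in the frequency directions. Two shared caveats, which you correctly flag rather than resolve (the paper is equally terse on both): the final step needs $e^{-c|\cdot |^{1/s}}\in L^1_{(v)}$, i.e.\ the weight $v$ of $\mascB$ must be of Beurling type of order $s$ (the paper silently assumes ``for some $v\in \mascP _{E,s}^0$''), and the passage from nice $f$ to all of $M(\omega _0\omega ,\mascB )$ is left implicit in both arguments.
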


\par

We need some preparations for the proof, and start with the following lemma.
%
%

\par

\begin{lemma}\label{Lemma:PrepReThm3.2A}
Suppose $s\ge 1$, $\omega \in \mascP _E(\rr {d_0})$ and that $f\in C^\infty
(\rr {d+d_0})$ satisfies
\begin{equation}\label{Eq:GevWeightEst}
|\partial ^\alpha f(x,y)|\lesssim h^{|\alpha |}\alpha !^se^{-r|x|^{\frac 1s}}\omega (y),
\quad \alpha \in \nn {d+d_0}
\end{equation}
for some $h>0$ and $r>0$. Then there are $f_0\in C^\infty (\rr {d+d_0})$
and $\psi \in \maclS _s(\rr d)$ such that \eqref{Eq:GevWeightEst} holds
with $f_0$ in place of $f$ for some $h>0$ and $r>0$, and
$f(x,y)= f_0(x,y)\psi (x)$.
\end{lemma}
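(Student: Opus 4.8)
The plan is to strip off from $f$ an explicit Gelfand--Shilov factor in the $x$-variables. Fix $\rho \in (0,r)$, put $\langle x\rangle = (1+|x|^2)^{1/2}$ and
\[
\psi (x) = e^{-\rho \langle x\rangle ^{1/s}},\qquad f_0(x,y) = \frac {f(x,y)}{\psi (x)} = f(x,y)\, e^{\rho \langle x\rangle ^{1/s}} .
\]
Since $\psi$ is smooth and strictly positive on $\rr d$, we have $f_0\in C^\infty (\rr {d+d_0})$ and $f(x,y)=f_0(x,y)\psi (x)$ by construction. Hence it only remains to check that $\psi \in \maclS _s(\rr d)$ and that $f_0$ satisfies \eqref{Eq:GevWeightEst} with $r$ replaced by a smaller $r_0>0$ and $h$ by a larger $h_0>0$.

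The computational core is the estimate: for every $\lambda \in \re$ there are constants $C_0,A>0$ (depending on $\lambda$, $s$ and $d$) with
\[
\big| \partial ^\alpha e^{\lambda \langle x\rangle ^{1/s}}\big| \le C_0 A^{|\alpha |}\alpha !^{s}\, e^{\lambda \langle x\rangle ^{1/s}},\qquad x\in \rr d,\ \alpha \in \nn d .
\]
Indeed, $g(x)=\langle x\rangle ^{1/s}$ is real-analytic, and Cauchy estimates on a polydisc of radius comparable to $\langle x\rangle$ give $|\partial ^\gamma g(x)|\lesssim B^{|\gamma |}|\gamma |!\,\langle x\rangle ^{\frac 1s-|\gamma |}\le B^{|\gamma |}|\gamma |!$ for $|\gamma |\ge 1$ (here $s\ge 1$ is used, so that $\langle x\rangle ^{\frac 1s-|\gamma |}\le 1$). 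Feeding this uniform bound into the Fa\`a di Bruno formula for $e^{\lambda g}$ and counting partitions produces the bound $|\partial ^\alpha e^{\lambda g(x)}| \le C_0A^{|\alpha |}|\alpha |!\,e^{\lambda g(x)}$; finally $|\alpha |!\le d^{|\alpha |}\alpha !\le d^{|\alpha |}\alpha !^{s}$ as $s\ge 1$. Taking $\lambda =-\rho$ and using $|x|^{1/s}\le \langle x\rangle ^{1/s}$ yields $|\partial ^\alpha \psi (x)|\lesssim A^{|\alpha |}\alpha !^{s}e^{-\rho |x|^{1/s}}$, so by the standard characterization of $\maclS _s$ through derivative estimates (cf. \cite{ChuChuKim,GZ,To18,To22}) we get $\psi \in \maclS _s(\rr d)$.

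For $f_0$ we write $\alpha =(\alpha ',\alpha '')\in \nn d\times \nn {d_0}$ and apply Leibniz' rule in the $x$-variables only, since $e^{\rho \langle \cdot \rangle ^{1/s}}$ is independent of $y$:
\[
\partial ^\alpha f_0 = \sum _{\beta \le \alpha '}\binom {\alpha '}{\beta }\big(\partial _x^{\beta }\partial _y^{\alpha ''}f\big)\,\partial _x^{\alpha '-\beta }e^{\rho \langle \cdot \rangle ^{1/s}} .
\]
By \eqref{Eq:GevWeightEst} for $f$ and the core estimate with $\lambda =\rho$, the general summand is bounded by a constant times
\[
\binom {\alpha '}{\beta }\, h^{|\beta |+|\alpha ''|}(\beta !\,\alpha ''!)^{s}A^{|\alpha '-\beta |}(\alpha '-\beta )!^{s}\, e^{-r|x|^{1/s}}e^{\rho \langle x\rangle ^{1/s}}\,\omega (y) .
\]
Now $\langle x\rangle ^{1/s}\le (1+|x|)^{1/s}\le 1+|x|^{1/s}$ gives $e^{-r|x|^{1/s}}e^{\rho \langle x\rangle ^{1/s}}\lesssim e^{-r_0|x|^{1/s}}$ with $r_0=r-\rho >0$; also $\binom {\alpha '}{\beta }(\beta !\,(\alpha '-\beta )!)^{s}=\binom {\alpha '}{\beta }^{1-s}\alpha '!^{s}\le \alpha '!^{s}$ since $s\ge 1$, and summing $h^{|\beta |}A^{|\alpha '-\beta |}$ over $\beta \le \alpha '$ costs only an extra factor $2^{|\alpha '|}$. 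Since $\alpha !=\alpha '!\,\alpha ''!$ and $|\alpha |=|\alpha '|+|\alpha ''|$, collecting constants gives $|\partial ^\alpha f_0(x,y)|\lesssim h_0^{|\alpha |}\alpha !^{s}e^{-r_0|x|^{1/s}}\omega (y)$ for a suitable $h_0>0$, i.e.\ \eqref{Eq:GevWeightEst} for $f_0$.

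The only delicate point is the core estimate on $\partial ^\alpha e^{\lambda \langle x\rangle ^{1/s}}$: one must verify that the Fa\`a di Bruno combinatorics costs only a single power of $|\alpha |!$, so that the Gevrey order $s$ is not spoiled. This, together with the bounds $\langle x\rangle ^{\frac 1s-|\gamma |}\le 1$ (for $|\gamma |\ge 1$) and $\binom {\alpha '}{\beta }^{1-s}\le 1$, is exactly where the hypothesis $s\ge 1$ enters; everything else is routine bookkeeping with Leibniz' rule and the subadditivity of $t\mapsto t^{1/s}$.
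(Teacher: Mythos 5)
Your proof is correct, and it follows the same underlying factorization idea as the paper: peel off from $f$ a positive, $x$-dependent factor $\psi$ of Gevrey order $s$ comparable to $e^{-\rho |x|^{1/s}}$, and set $f_0=f/\psi$, finishing with Leibniz' rule (your binomial identity $\binom{\alpha '}{\beta}(\beta !(\alpha '-\beta )!)^s=\binom{\alpha '}{\beta}^{1-s}\alpha '!^s\le \alpha '!^s$ and the subadditivity of $t\mapsto t^{1/s}$ are exactly the right bookkeeping). The difference lies in how the auxiliary factor is produced. The paper invokes Proposition \ref{Prop:EquivWeights} to obtain a smooth submultiplicative weight $v_0\asymp e^{\frac r2|\cdot |^{1/s}}$ satisfying $|\partial ^\alpha v_0|\lesssim h^{|\alpha |}\alpha !^s v_0$, applies Fa{\`a} di Bruno to $1/v_0$ to get the same bounds for the reciprocal, and sets $\psi =1/v_0$, $f_0=f\, v_0$; you instead take the explicit function $\psi (x)=e^{-\rho \langle x\rangle ^{1/s}}$ and verify the Gevrey-$s$ bounds for $e^{\pm \rho \langle x\rangle ^{1/s}}$ directly, via Cauchy estimates for $\langle x\rangle ^{1/s}$ on a polydisc of radius $\asymp \langle x\rangle$ followed by Fa{\`a} di Bruno for $e^{\lambda g}$ (where $s\ge 1$ enters through $\langle x\rangle ^{\frac 1s-|\gamma |}\le 1$). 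Your route is more self-contained and explicit, at the cost of carrying out the holomorphic-extension and partition-counting estimates yourself (which you correctly flag as the delicate step, and which do close: the Bell-polynomial sum costs only $C^{|\alpha |}|\alpha |!\le C_1^{|\alpha |}\alpha !\le C_1^{|\alpha |}\alpha !^s$); the paper's route reuses the already established regularization of weights in $\mascP _E$, which is also what makes it adaptable verbatim to the Beurling-type variant needed for Lemma \ref{Lemma:PrepReThm3.2B}. Note that your version, with $\rho \in (0,r)$ arbitrary, also yields the "for every $h,r$" statement after the obvious modifications, so nothing is lost.
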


\par

\begin{proof}
By Proposition \ref{Prop:EquivWeights}, there is a submultiplicative weight
$v_0\in \mascP _{E,s}(\rr d)\cap C^\infty (\rr d)$ such that
\begin{align}
v_0(x)&\asymp  e^{\frac r2|x |^{\frac 1s}}\label{Eq:v0AsympEst}
\intertext{and}
|\partial ^\alpha v_0(x)| &\lesssim h^{|\alpha |}\alpha !^sv_0(x),\qquad \alpha \in \nn d
\label{Eq:vDerEst}
\intertext{for some $h>0$. Since $s\ge 1$, a straight-forward application of Fa{\`a} di Bruno's formula
on \eqref{Eq:vDerEst} gives}
\left | \partial ^\alpha \left (\frac 1{v_0(x)}\right ) \right | &\lesssim
h^{|\alpha |}\alpha !^s\cdot \frac 1{v_0(x)},\qquad \alpha \in \nn d
\tag*{(\ref{Eq:vDerEst})$'$}
\end{align}
for some $h>0$. It follows from \eqref{Eq:v0AsympEst} and \eqref{Eq:vDerEst}$'$
that if $\psi =1/v$, then $\psi \in \maclS _s(\rr d)$. Furthermore, if $f_0(x,y)=f(x,y)v_0(x)$,
then an application of Leibnitz formula gives
\begin{multline*}
|\partial ^{\alpha}_x\partial ^{\alpha _0}_yf_0(x,y)|\lesssim
\sum _{\gamma \le \alpha }{{\alpha} \choose {\gamma}} |\partial ^\delta _x
\partial ^{\alpha _0}_yf(x,y)|
\, |\partial ^{\alpha -\delta }v_0(x)|
\\[1ex]
\lesssim
h^{|\alpha |+|\alpha _0|} \sum _{\gamma \le \alpha }{{\alpha} \choose {\gamma}}
(\gamma !\alpha _0!)^{s}e^{-r|x|^{\frac 1s}}\omega (y)(\alpha -\gamma )!^sv_0(x)
\\[1ex]
\lesssim
(2h)^{|\alpha |+|\alpha _0|}(\alpha !\alpha _0!)^se^{-r|x|^{\frac 1s}}v_0(x)\omega (y)
\\[1ex]
\asymp
(2h)^{|\alpha |+|\alpha _0|}(\alpha !\alpha _0!)^se^{-\frac 2r|x|^{\frac 1s}}v_0(x)\omega (y)
\end{multline*}
for some $h>0$, which gives the desired estimate on $f_0$, The result now
follows since it is evident that $f(x,y)= f_0(x,y)\psi (x)$.
\end{proof}

\par

For the next lemma we recall that for any $a\in \Sigma _1'(\rr {2d})$
there is a unique $b\in \Sigma _1'(\rr {2d})$ such that
$\op (a)^*=\op (b)$, and then $b(x,\xi )= e^{i\scal {D_\xi}{D_x}}\overline {a(x,\xi )}$
in view of \cite[Theorem 18.1.7]{Ho1}. Furthermore, by the latter equality and
\cite[Theorem 4.1]{CaTo} it follows that
$$
a\in \Gamma ^{(\omega )}_{s}(\rr {2d})
\quad \Leftrightarrow \quad
b\in \Gamma ^{(\omega )}_{s}(\rr {2d}).
$$

\par

\begin{lemma}\label{Lemma:PrepReThm3.2}
Let $s\ge 1$, $\omega \in \mascP _{E,s}^0(\rr {2d})$, $\vartheta \in
\mascP _{E,s}^0(\rr {d})$ and $v\in \mascP _{E,s}^0(\rr d)$ be such that
$v$ is submultiplicative, $\omega \in \Gamma ^{(\omega )}_{0,s}(\rr {2d})$
is $v\otimes v$-moderate, $\vartheta =v^{-\frac 12}$ and $\vartheta
\in \Gamma ^{(\vartheta )}_{0,s}(\rr d)$.
Also let $a\in \Gamma ^{(\omega )}_{s}(\rr {2d})$, choose $b\in
\Gamma ^{(\omega )}_{s}(\rr {2d})$ such that $\op (b)=\op (a)^*$,
$f\in \maclS _s(\rr d)$, $\phi \in \Sigma _s(\rr d)$, $\fy
= \widehat \phi \cdot v$,
\begin{align}
\Phi (x,\xi ,y,\eta ) &= \frac {b(y,\xi +\eta )}{\omega (x,\xi
)v(x-y)v(\eta )}\label{Phidef}
\intertext{and}
H(x,\xi ,y) &= v(x-y)\int \Phi (x,\xi ,y,\eta )\fy (\eta )
e^{i\scal {y-x}{\eta}}\, d\eta .\notag
\end{align}
Then
\begin{equation}\label{stftpseudoform}
V_\phi (\op (a)f)(x,\xi ) = (2\pi )^{-d} (f,e^{i\scal \cdo \xi
}H(x,\xi ,\cdo ))\omega (x,\xi
).
\end{equation}
Furthermore the following is true:
\begin{enumerate}
\item $H\in C^\infty (\rr {3d})$ and satisfies
\begin{equation}\label{Eq:DerHEst}
|\partial^\alpha H(x,\xi ,y)| \lesssim h_0^{|\alpha |}\alpha !^se^{-r_0|x-y|^{\frac 1s}},
\end{equation}
for some $h_0,r_0>0$;

\vrum

\item there are functions $H_0\in C^\infty (\rr {3d})$ and $\phi _0\in \maclS _s(\rr d)$
such that
\begin{equation}\label{Eq:HProd}
H(x,\xi ,y) = H_0(x,\xi ,y)\phi _0(y-x),
\end{equation}
and such that \eqref{Eq:DerHEst} holds for some $h_0,r_0>0$,
with $H_0$ in place of $H$.
\end{enumerate}
\end{lemma}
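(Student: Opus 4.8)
The plan is to verify the integral identity \eqref{stftpseudoform} first, then establish the Gevrey/exponential decay estimate \eqref{Eq:DerHEst} for $H$, and finally factor off a Gelfand-Shilov factor using Lemma \ref{Lemma:PrepReThm3.2A}. For the identity, I would start from the adjoint relation $\op(a)^* = \op(b)$, so that $V_\phi(\op(a)f)(x,\xi) = (\op(a)f, \phi(\cdot - x)e^{i\scal \cdot \xi}) = (f, \op(b)(\phi(\cdot - x)e^{i\scal \cdot \xi}))$. Writing $\op(b)$ in its Kohn-Nirenberg form \eqref{e0.5} with $A=0$ and applying it to the modulated translate $\phi(\cdot - x)e^{i\scal \cdot \xi}$, one computes the partial Fourier transform explicitly: the frequency variable is shifted by $\xi$, which is exactly why the symbol in \eqref{Phidef} is evaluated at $\xi + \eta$. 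Collecting the weight factors $\omega(x,\xi)$, $v(x-y)$, $v(\eta)$ that were introduced to define $\Phi$ and $H$, and recognizing $\fy = \widehat\phi \cdot v$, the integral reorganizes into the claimed form $(2\pi)^{-d}(f, e^{i\scal \cdot \xi} H(x,\xi,\cdot))\,\omega(x,\xi)$. This step is essentially bookkeeping once the partial Fourier transform of the modulated window is carried out carefully.

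For part (1), the estimate \eqref{Eq:DerHEst}, I would differentiate $H$ under the integral sign in $(x,\xi,y)$. The key inputs are: $b \in \Gamma^{(\omega)}_s$, so $|\partial^\gamma b(y,\zeta)| \lesssim h^{|\gamma|}\gamma!^s \omega(y,\zeta)$; $\omega \in \Gamma^{(\omega)}_{0,s}$ is $v\otimes v$-moderate, so derivatives of $1/\omega(x,\xi)$ and of $1/(v(x-y)v(\eta))$ again satisfy Gevrey bounds (using Fa\`a di Bruno as in the proof of Lemma \ref{Lemma:PrepReThm3.2A}), hence $\Phi$ obeys $|\partial^\alpha \Phi| \lesssim h^{|\alpha|}\alpha!^s \cdot (\text{controlled ratio})$ where the ratio is bounded because $\omega$ is $v\otimes v$-moderate; and $\fy = \widehat\phi \cdot v$ with $\phi \in \Sigma_s$, so $\widehat\phi \in \Sigma_s$ decays like $e^{-c|\eta|^{1/s}}$ for every $c$, which beats the polynomial growth coming from differentiating the oscillatory factor $e^{i\scal{y-x}\eta}$ in $\eta$ (each $x$- or $y$-derivative brings down a factor of $\eta$, absorbed into the super-exponential decay of $\widehat\phi$ at the cost of a Gevrey constant, exactly as in the proof of Proposition \ref{Prop:CharGammaSTFT}). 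The leftover factor $v(x-y)$ multiplying the integral, together with the $v$-moderation of $\omega$, produces the decaying exponential $e^{-r_0|x-y|^{1/s}}$: concretely, $v(x-y)$ appears in the denominator of $\Phi$ and also as a prefactor, and the mismatch after using moderation of $\omega$ in the $y$-variable leaves a net negative exponent. I expect this to be the main obstacle --- one must track how the various $v$-factors combine so that the $x-y$ dependence ends up with the right sign, and one must check the $\eta$-integral converges uniformly after differentiation.

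For part (2), once \eqref{Eq:DerHEst} is in hand, the function $H(x,\xi,y)$ --- viewed with the decay variable being $x - y$ (or equivalently $y - x$) and the remaining variables $(\xi, y)$ or $(\xi,x)$ playing the role of the weighted variable with trivial weight $\omega \equiv 1$ in those slots --- satisfies precisely the hypothesis \eqref{Eq:GevWeightEst} of Lemma \ref{Lemma:PrepReThm3.2A} (after a linear change of variables $(x,y) \mapsto (x-y, \text{something})$, which preserves Gevrey estimates since $s \ge 1$). Applying that lemma yields $H(x,\xi,y) = H_0(x,\xi,y)\,\psi(x-y)$ with $\psi \in \maclS_s(\rr d)$ and $H_0$ still satisfying a bound of the form \eqref{Eq:DerHEst}; setting $\phi_0 = \psi$ gives \eqref{Eq:HProd}. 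The only care needed here is to set up the change of variables so that the argument of the extracted Gelfand-Shilov factor is $y - x$ as stated, and to confirm that the affine substitution does not spoil the $\alpha!^s$ bounds, which is automatic for $s \ge 1$ by the submultiplicativity of factorials.
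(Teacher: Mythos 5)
Your treatment of the identity \eqref{stftpseudoform} and of part (2) follows the paper's route (adjoint relation plus Kohn--Nirenberg computation, then Lemma \ref{Lemma:PrepReThm3.2A} applied to $y\mapsto H(x,\xi ,x-y)$), and those steps are fine. The genuine gap is in your mechanism for the decay factor $e^{-r_0|x-y|^{1/s}}$ in \eqref{Eq:DerHEst}. You attribute it to weight bookkeeping: ``$v(x-y)$ appears in the denominator of $\Phi$ and also as a prefactor, and the mismatch after using moderation of $\omega$ in the $y$-variable leaves a net negative exponent.'' This cannot work. After the cancellations you describe, $H(x,\xi ,y)=\int \frac {b(y,\xi +\eta )}{\omega (x,\xi )}\,\widehat \phi (\eta )\, e^{i\scal {y-x}\eta}\, d\eta$, and the only estimate moderation gives is $|b(y,\xi +\eta )|\lesssim \omega (y,\xi +\eta )\lesssim \omega (x,\xi )\, v(x-y)\, v(\eta )$; taking absolute values inside the integral therefore yields $|H(x,\xi ,y)|\lesssim v(x-y)$, which \emph{grows} in $|x-y|$. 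Moderation never produces a negative exponent in this direction; the weights in \eqref{Phidef} were inserted precisely to make $\Phi$ a bounded Gevrey symbol, not to create decay.

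The decay must come from the oscillation $e^{i\scal {y-x}\eta}$ combined with the Gevrey regularity of the integrand in $\eta$, and this is exactly how the paper argues: set $\Phi _0(x,\xi ,y,\eta )=\Phi (x,\xi ,y,\eta )\fy (\eta )$, note that $|\partial ^\alpha \Phi _0|\lesssim h_0^{|\alpha |}\alpha !^s e^{-r_0|\eta |^{1/s}}$ (so $\eta \mapsto \Phi _0$ lies in $\maclS _s(\rr d)$ uniformly in the remaining variables), and observe that the integral defining $H$ is the partial Fourier transform $\Psi =\mascF _4\Phi _0$ evaluated at $x-y$, i.e. $H(x,\xi ,y)=v(x-y)\Psi (x,\xi ,x-y)$. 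The Gelfand--Shilov characterization of the Fourier transform (equivalently: integrate by parts $|\beta |$ times in $\eta$, bound $|(x-y)^\beta \Psi |\lesssim h^{|\beta |}\beta !^s$, and optimize over $\beta$) gives $|\partial ^\alpha \Psi (x,\xi ,y_1,y_2)|\lesssim h_0^{|\alpha |}\alpha !^s e^{-r_0|y_2|^{1/s}}$, and since $v\in \mascP _{E,s}^0$ grows slower than any $e^{\ep |\cdot |^{1/s}}$, the prefactor $v(x-y)$ is absorbed, yielding \eqref{Eq:DerHEst}. Your differentiation-under-the-integral argument does correctly produce the $h_0^{|\alpha |}\alpha !^s$ part, but without this Fourier/integration-by-parts step (or an equivalent one) part (1) --- and hence the factorization in part (2), which depends on it --- is not established.
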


\par

\begin{proof}
When proving the first part, we shall use some ideas in the proof of
\cite[Lemma 3.3]{To14}. By straight-forward computations we get
\begin{multline}\label{Eq:storformel}
V_\phi (\op (a)f)(x,\xi ) = (2\pi )^{-\frac d2}(\op (a)f, 
\phi (\cdo -x)\, e^{i\scal \cdo \xi})
\\[1ex]
= (2\pi )^{-\frac d2}(f, \op (b)(\phi (\cdo -x)\, e^{i\scal \cdot \xi}))
\\[1ex]
= (2\pi )^{-d} (f,e^{i\scal \cdo \xi }H_1(x,\xi ,\cdo
))\omega (x,\xi ),
\end{multline}
where
\begin{multline*}
H_1(x,\xi ,y) = (2\pi )^{\frac d2}e^{-i\scal y\xi}(\op (b)(\phi (\cdo -x)
\, e^{i\scal \cdot \xi}))(y)/\omega (x,\xi )
\\[1ex]
= \int \frac {b(y,\eta )}{\omega (x,\xi )}\widehat \phi (\eta -\xi )
e^{-i\scal {x-y}{\eta -\xi}}\, d\eta 
\\[1ex]
= v(x-y) \int \Phi (x,\xi ,y,\eta -\xi )\fy (\eta -\xi )e^{-i\scal {x-y}{\eta -\xi}}\, d\eta .
\end{multline*}
If $\eta -\xi$ are taken as new variables of integrations,
it follows that the right-hand side is equal to $H(x,y,\xi )$. This
gives the first part of the lemma.

\par

In order to prove (1), let
$$
\Phi _0(x,\xi ,y,\eta ) = \Phi (x,\xi ,y,\eta )\fy (\eta ),
$$
and let $\Psi =\mascF _4\Phi _0$, where $\mascF _4\Phi _0$ is the partial
Fourier transform of $\Phi _0(x,\xi ,y,\eta )$ with respect to the $\eta$ variable.
Then it follows from the assumptions that
$$
|\partial ^\alpha \Phi _0(x,\xi ,y,\eta )|
\lesssim h_0^{|\alpha |}\alpha !^se^{-r_0|\eta |^{\frac 1s}},
$$
for some $h_0,r_0>0$, which shows that $\eta \mapsto \Phi _0(x,\xi ,y,\eta )$
is an element in $\maclS _s(\rr d)$ with values in $\Gamma ^{(1)}_{s}(\rr {3d})$.
As a consequence, $\Psi$ satisfies
$$
|\partial^\alpha \Psi (x,\xi ,y_1 ,y_2 )|
\lesssim h_0^{|\alpha |}\alpha !^se^{-r_0|y_2|^{\frac 1s}},
$$
for some $h_0,r_0>0$. The assertion (1) now follows from the latter
estimate, Leibnitz rule and the fact that
$$
H(x,\xi ,y) = v(x-y)\Psi (x,\xi ,x-y)
$$
%
%
%
%
%

\par

In order to prove (2) we notice that \eqref{Eq:DerHEst} shows that
$y\mapsto H(x,\xi ,x-y)$ is an element in $\maclS _s(\rr d)$ with values
in $\Gamma ^{(1)}_{s}(\rr {2d})$. By Lemma \ref{Lemma:PrepReThm3.2A} there are
$H_2\in C^\infty (\rr {3d})$ and $\phi _0\in \maclS _s(\rr d)$ such that
\eqref{Eq:DerHEst} holds for some $h_0,r_0>0$ with $H_2$ in place of $H$,
and
$$
H(x,\xi ,x-y)= H_2(x,\xi ,x-y)\phi _0(-y).
$$
This is the same as (2), and the result follows.
\end{proof}

\par

\begin{proof}[Proof of Theorem \ref{p3.2}]
We may assume that $A=0$. Let $g=\op (a)f$. By
Lemma \ref{Lemma:PrepReThm3.2} we have
\begin{multline*}
V_\phi g(x,\xi )
=
(2\pi )^{-\frac d2}\mascF ((f\cdot \overline {\phi _0(\cdo -x)}) \cdot H_0(x,\xi ,\cdo ))(\xi )
\omega (x,\xi )
\\[1ex]
=
(2\pi )^{-d}\mascF ((f\cdot \overline {\phi _0(\cdo -x)})) *(\mascF  (H_0(x,\xi ,\cdo )))(\xi )
\omega (x,\xi )
\\[1ex]
=
(2\pi )^{-d} (V_{\phi _0}f)(x,\cdo ) * (\mascF  (H_0(x,\xi ,\cdo )))(\xi )
\omega (x,\xi )
\end{multline*}
Since $\omega$ and $\omega _0$ belongs to $\mascP _{E,s}^0(\rr {2d})$, (2) in Lemma
\ref{Lemma:PrepReThm3.2} gives
\begin{equation*}
|V_\phi g(x,\xi )\omega _0(x,\xi )| \lesssim |V_{\phi _0}f)(x,\cdo )\omega (x,\cdo )
\omega _0(x,\cdo )| * e^{-\frac {r_0}2|\cdo |^{\frac 1s}}.
\end{equation*}
Here we have used the fact that
$$
\omega (x,\xi )\omega _0(x,\xi )\lesssim \omega (x-y,\xi )\omega _0(x-y,\xi )
e^{\frac {r_0}2|\cdo |^{\frac 1s}}.
$$

\par

By applying the $\mascB$ norm we get for some $v\in \mascP _{E,s}^0(\rr d)$,
\begin{multline*}
\nm g{M(\omega _0,\mascB )}\lesssim \nm {|V_{\phi _0}f) \cdot \omega \cdot \omega _0|
* e^{-r_0|\cdo |^{\frac 1s}}\otimes \delta _0}{\mascB}
\\[1ex]
\le
\nm {V_{\phi _0}f) \cdot \omega \cdot \omega _0}{\mascB}\nm {e^{-r_0|\cdo |^{\frac 1s}}v}{L^1}
\asymp \nm f{M(\omega \cdot \omega _0,\mascB )}.
\end{multline*}
This gives the result.
\end{proof}

\par

By similar arguments as in the proof of Theorem \ref{p3.2} and Lemma
\ref{Lemma:PrepReThm3.2}  we get the following.
The details are left for the reader.

\par

\begin{thm}\label{p3.2B}
Let $A\in \GL (d,\mathbf R)$, $s\ge 1$, $\omega ,\omega _0\in\mascP _{E,s}(\rr {2d})$,
$a\in \Gamma ^{(\omega _0)}_{0,s}(\rr {2d})$, and that $\mascB$
is an invariant BF-space on $\rr {2d}$. Then
$\op _A(a)$ is continuous from $M(\omega _0\omega ,\mascB )$
to $M(\omega ,\mascB )$.
\end{thm}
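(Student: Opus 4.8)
The plan is to mimic the proof of Theorem \ref{p3.2}, replacing each ``for some $h>0$/$r>0$'' statement by its ``for every $h>0$/$r>0$'' counterpart and each occurrence of $\mascP _{E,s}^0$ by $\mascP _{E,s}$. First I would reduce to the case $A=0$ via \eqref{pseudorelation} and Lemma \ref{Somega}, which ensures that the Gevrey class $\Gamma ^{(\omega _0)}_{0,s}$ is preserved when passing between Kohn--Nirenberg and $A$-quantizations. Next I would set up the Beurling-type analogue of Lemma \ref{Lemma:PrepReThm3.2}: choosing $b\in \Gamma ^{(\omega _0)}_{0,s}$ with $\op (b)=\op (a)^*$ (using the remark preceding Lemma \ref{Lemma:PrepReThm3.2} and \cite[Theorem 4.1]{CaTo}, now in the Beurling formulation), one obtains the same representation \eqref{stftpseudoform} for $V_\phi (\op (a)f)$, with the auxiliary functions $H$ and $H_0$ now satisfying \eqref{Eq:DerHEst} \emph{for every} $h_0>0$ and some $r_0>0$ depending on the weight. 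The key internal ingredient is the Beurling version of Lemma \ref{Lemma:PrepReThm3.2A}, whose proof goes through verbatim since Proposition \ref{Prop:EquivWeights} already produces a smooth equivalent weight with the ``for every $h>0$'' derivative bound; the Leibniz-rule estimate then propagates the ``every $h_0$'' factorisation to $H_0(x,\xi ,\cdot )\phi _0(\cdot -x)$ with $\phi _0\in \maclS _s$.

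Once the representation is in place, the estimate proceeds exactly as in the proof of Theorem \ref{p3.2}. Writing $g=\op (a)f$, I would expand
\begin{equation*}
V_\phi g(x,\xi ) = (2\pi )^{-d}\, (V_{\phi _0}f)(x,\cdot ) * \bigl(\mascF (H_0(x,\xi ,\cdot ))\bigr)(\xi )\, \omega (x,\xi ),
\end{equation*}
and then use that $\omega ,\omega _0\in \mascP _{E,s}(\rr {2d})$ to absorb the weight shift: for some $r>0$ one has $\omega (x,\xi )\omega _0(x,\xi )\lesssim \omega (x-y,\xi )\omega _0(x-y,\xi )e^{r|y|^{1/s}}$, by \eqref{Eq:ModWeightProp}. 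Here is the only genuinely new point: in Theorem \ref{p3.2} the Roumieu hypothesis $\omega ,\omega _0\in \mascP _{E,s}^0$ gave a weight shift valid for \emph{every} small $r$, which could then be defeated by the super-exponentially decaying kernel; now we only get one fixed $r$ from the weights, but we may choose $r_0>2r$ in the ``for every $h_0$'' estimate \eqref{Eq:DerHEst} for $H_0$, so that $e^{-r_0|\cdot |^{1/s}}e^{r|\cdot |^{1/s}}$ is still integrable against any $v\in \mascP _{E,s}$. Thus the pointwise bound
\begin{equation*}
|V_\phi g(x,\xi )\omega (x,\xi )| \lesssim \bigl(|V_{\phi _0}f(x,\cdot )\,\omega (x,\cdot )\,\omega _0(x,\cdot )| * e^{-\frac {r_0}{2}|\cdot |^{1/s}}\bigr)(\xi )
\end{equation*}
holds, and applying the $\mascB$-norm together with the invariant BF-space property \eqref{Eq:MinkIneq} (and Proposition \ref{p1.4BFA}) yields $\nm g{M(\omega ,\mascB )}\lesssim \nm f{M(\omega _0\omega ,\mascB )}$.

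The main obstacle is bookkeeping the quantifier on $r_0$ throughout Lemmas \ref{Lemma:PrepReThm3.2A} and \ref{Lemma:PrepReThm3.2}: one must verify that the decay rate $r_0$ in \eqref{Eq:DerHEst} can be taken as large as desired independently of the ``for every $h_0$'' Gevrey constant, so that it can always dominate the fixed weight-moderation rate coming from $\omega \otimes \omega _0$ being $v\otimes v$-moderate. Concretely, $r_0$ is inherited from the decay of $\widehat \phi \cdot v$ and of $\phi _0$, both of which lie in $\maclS _s$ with arbitrarily fast super-exponential decay once $\phi \in \Sigma _s$; since $\phi$ is at our disposal this causes no difficulty, but it is the point that must be stated carefully rather than asserted. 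The remaining steps --- Faà di Bruno for $1/v_0$, Leibniz for the products, and the density argument behind Proposition \ref{p1.4BFA} --- are identical to the Roumieu case and need no change.
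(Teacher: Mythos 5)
Your overall plan is exactly the paper's: the paper proves Theorem \ref{p3.2B} by repeating the argument for Theorem \ref{p3.2} with Lemma \ref{Lemma:PrepReThm3.2} replaced by its Beurling counterpart, Lemma \ref{Lemma:PrepReThm3.2B}, and the quantifier swap you single out (a fixed moderation rate $r$ coming from $\omega ,\omega _0\in \mascP _{E,s}$, defeated by a convolution kernel whose subexponential decay rate can be made arbitrarily large because $a\in \Gamma ^{(\omega _0)}_{0,s}$ and $\phi \in \Sigma _s$) is indeed the only genuinely new point. Your reduction to $A=0$ via Lemma \ref{Somega}, the adjoint symbol $b$, the representation \eqref{stftpseudoform} and the final application of \eqref{Eq:MinkIneq} all match the paper.

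The one place where ``goes through verbatim'' is not quite right is the factorisation \eqref{Eq:HProd}. The paper's Lemma \ref{Lemma:PrepReThm3.2B} takes $f,\phi \in \Sigma _s$ and produces $\phi _0\in \Sigma _s$ and \eqref{Eq:DerHEst} for \emph{every} $h_0,r_0$, whereas a verbatim repetition of Lemma \ref{Lemma:PrepReThm3.2A} (with $\psi =1/v_0$, $v_0\asymp e^{\frac r2|\cdot |^{1/s}}$ for one fixed $r$) only yields $\phi _0\in \maclS _s$ with one fixed decay rate, which is what you assert. This matters at the last step: the identification $\nm {V_{\phi _0}f\cdot \omega _0\omega}{\mascB}\asymp \nm f{M(\omega _0\omega ,\mascB )}$ uses window independence (Proposition \ref{p1.4BF}), and with Beurling weights the moderation rate of $\omega _0\omega$ (and of the weight of $\mascB$) is a fixed, possibly large number, so a $\maclS _s$-window of unspecified rate is not automatically admissible; in the Roumieu Theorem \ref{p3.2} this was harmless because the weights there are moderate for every $r$. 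Your scheme is repairable --- since $H$ now decays at every rate in $|x-y|$, the auxiliary weight $v_0$ in the factorisation may be chosen with rate as large as one likes, after the moderation rates of $\omega ,\omega _0$ and $\mascB$ are fixed --- but this is exactly the non-verbatim content of the Beurling lemma, and your stated justification misplaces it: the decay rate of the $\xi$-convolution kernel $\mascF (H_0(x,\xi ,\cdot ))$ is governed by the Gevrey constant $h_0$ in \eqref{Eq:DerHEst} (equivalently by the $\eta$-decay of $\widehat \phi \cdot v$ propagated through the partial Fourier transform), not by the spatial rate $r_0$, while the decay rate of $\phi _0$ comes from the choice of $v_0$ in the factorisation, not from ``$\phi$ being at our disposal''. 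Stating these two choices explicitly, in that order, closes the gap.
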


\par

\begin{lemma}\label{Lemma:PrepReThm3.2B}
Let $s\ge 1$, $\omega \in \mascP _{E,s}(\rr {2d})$, $\vartheta \in
\mascP _{E,s}(\rr {d})$ and $v\in \mascP _{E,s}(\rr d)$ be such that
$v$ is submultiplicative, $\omega \in \Gamma ^{(\omega )}_{0,s}(\rr {2d})$
is $v\otimes v$-moderate, $\vartheta =v^{-\frac 12}$ and $\vartheta
\in \Gamma ^{(\vartheta )}_{0,s}(\rr d)$.
Also let $a\in \Gamma ^{(\omega )}_{0,s}(\rr {2d})$,
$f,\phi \in \Sigma _s(\rr d)$, $\phi _2= \phi v$, and let $\Phi$ and $H$ be as in Lemma
\ref{Lemma:PrepReThm3.2}.
Then \eqref{stftpseudoform} and the following hold true:
\begin{enumerate}
\item $H\in C^\infty (\rr {3d})$ and satisfies \eqref{Eq:DerHEst}
for every $h_0,r_0>0$;

\vrum

\item there are functions $H_0\in C^\infty (\rr {3d})$ and $\phi _0\in \Sigma _s(\rr d)$
such that \eqref{Eq:HProd} holds,
and such that \eqref{Eq:DerHEst} holds for every $h_0,r_0>0$,
with $H_0$ in place of $H$.
\end{enumerate}
\end{lemma}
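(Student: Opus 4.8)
The plan is to repeat the proof of Lemma \ref{Lemma:PrepReThm3.2} line by line, systematically replacing each Roumieu-type condition ``for some $h$'' (and likewise for $r$) by its Beurling analogue ``for every $h$'', and $\maclS _s$ by $\Sigma _s$. First I would record that \eqref{stftpseudoform} is produced by exactly the computation \eqref{Eq:storformel}: one writes $V_\phi (\op (a)f)=(2\pi )^{-d/2}(f,\op (b)(\phi (\cdo -x)e^{i\scal \cdo \xi}))$ with $\op (b)=\op (a)^*$, expands $\op (b)$, and substitutes $\eta -\xi$ for $\eta$. This step uses no decay of $f$ or $\phi$, so it carries over verbatim, once one notes that $a\in \Gamma ^{(\omega )}_{0,s}(\rr {2d})$ forces $b\in \Gamma ^{(\omega )}_{0,s}(\rr {2d})$, by \cite[Theorem 4.1]{CaTo} together with \cite[Theorem 18.1.7]{Ho1}.

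For part (1) I would first check that $\fy =\widehat \phi \cdot v\in \Sigma _s(\rr d)$: since $\phi \in \Sigma _s(\rr d)$ we have $\widehat \phi \in \Sigma _s(\rr d)$, while $v\in \mascP _{E,s}(\rr d)$ satisfies $v(\eta )\lesssim e^{r|\eta |^{\frac 1s}}$ for some $r>0$, and $\vartheta =v^{-\frac 12}\in \Gamma ^{(\vartheta )}_{0,s}(\rr d)$ together with Fa{\`a} di Bruno's formula yields $|\partial ^\alpha v(\eta )|\lesssim h^{|\alpha |}\alpha !^sv(\eta )$ for every $h>0$, just as in the proof of Lemma \ref{Lemma:PrepReThm3.2A}. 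With this, together with $b\in \Gamma ^{(\omega )}_{0,s}(\rr {2d})$ and the fact that $\omega$ is $v\otimes v$-moderate with $\omega \in \Gamma ^{(\omega )}_{0,s}(\rr {2d})$, the Leibnitz estimates of Lemma \ref{Lemma:PrepReThm3.2} show that $\eta \mapsto \Phi _0(x,\xi ,y,\eta )=\Phi (x,\xi ,y,\eta )\fy (\eta )$ is an element of $\Sigma _s(\rr d)$ with values in $\Gamma ^{(1)}_{0,s}(\rr {3d})$; hence $\Psi =\mascF _4\Phi _0$ satisfies the corresponding bound with $e^{-r_0|y_2|^{\frac 1s}}$ for every $r_0>0$, and since $H(x,\xi ,y)=v(x-y)\Psi (x,\xi ,y,x-y)$ with $v$ Gevrey regular of at most subexponential growth of order $\frac 1s$, one more Leibnitz expansion delivers \eqref{Eq:DerHEst} for $H$ for every $h_0,r_0>0$, which is (1).

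For part (2) I would invoke the Beurling counterpart of Lemma \ref{Lemma:PrepReThm3.2A}: viewing $z\mapsto H(x,\xi ,x-z)$ as a $\Sigma _s(\rr d)$-function with values in $\Gamma ^{(1)}_{0,s}(\rr {2d})$, one factors it as in \eqref{Eq:HProd}, $H(x,\xi ,y)=H_0(x,\xi ,y)\phi _0(y-x)$ with $\phi _0\in \Sigma _s(\rr d)$ and $H_0$ again satisfying \eqref{Eq:DerHEst} for every $h_0,r_0>0$. I expect this to be the step requiring the most care. In Lemma \ref{Lemma:PrepReThm3.2A} one simply takes the auxiliary Gevrey weight $v_0=1/\phi _0$ comparable to $e^{\frac r2|\cdo |^{\frac 1s}}$ for the fixed rate $r$ supplied by the decay of $H$; here $H$ decays faster than $e^{-r_0|\cdo |^{\frac 1s}}$ for \emph{every} $r_0$, so $v_0$ must instead be built (through the smoothing procedure of Proposition \ref{Prop:EquivWeights} and Fa{\`a} di Bruno's formula, which control the Gevrey estimates on both $v_0$ and $1/v_0$) so as to grow along a profile $t\mapsto e^{\rho (t)t^{\frac 1s}}$ with $\rho (t)\to \infty$ chosen slowly enough that $1/v_0\in \Sigma _s(\rr d)$ while $H_0=H\cdot v_0$ still decays faster than every $e^{-r_0|\cdo |^{\frac 1s}}$; verifying that such a profile still yields a moderate weight is where the argument genuinely differs from the Roumieu case. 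Granting this construction, the remaining estimates coincide with those in the proof of Lemma \ref{Lemma:PrepReThm3.2}, with ``some'' replaced by ``every'' throughout, and the lemma follows.
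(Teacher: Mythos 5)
Your proposal takes essentially the same route as the paper: the paper gives no separate proof of Lemma \ref{Lemma:PrepReThm3.2B}, saying only that it follows by the same arguments as Lemma \ref{Lemma:PrepReThm3.2} and Theorem \ref{p3.2} with the details left to the reader, and your Roumieu-to-Beurling translation (replacing ``for some $h,r$'' by ``for every $h,r$'' and $\maclS _s$ by $\Sigma _s$, with $b\in \Gamma ^{(\omega )}_{0,s}$ via \cite[Theorem 4.1]{CaTo}) is exactly that. Your flagged subtlety about the Beurling counterpart of Lemma \ref{Lemma:PrepReThm3.2A} --- that $\phi _0\in \Sigma _s$ forces $v_0=1/\phi _0$ to grow faster than every $e^{r|\cdo |^{\frac 1s}}$, so $v_0$ must be built from a moderate, Gevrey-regular profile adapted to the actual decay of $H$ rather than from a single fixed exponential --- is precisely the one non-mechanical point in the translation, and your sketch of how to handle it is sound.
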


\par

We finish the section by discussing continuity for pseudo-differential operators
with symbols in $\Gamma ^{(\omega _0)}_{s}$ or in $\Gamma ^{(\omega _0)}_{0,s}$
when acting on quasi-Banach modulation spaces. More precisely, by straight-forward
computations it follows that if $\omega ,\omega _0 \in \mascP _{E,s}(\rr {2d})$
($\omega ,\omega _0\in \mascP _{E,s}^0(\rr {2d})$),
then
$$
\frac {\omega (x,\xi  )}{\omega 
(y,\eta )\omega _0(y,\eta )} \lesssim \frac {e^{r(|\xi -\eta |^{\frac 1s}+|y-x|^{\frac 1s})}}{\omega _0( x,\eta )}.
$$
holds for some $r>0$ (for every $r>0$). Hence the following result is
a straight-forward consequence of Propositions \ref{Prop:OpCont} and
\ref{Prop:GammaModIdent}, and Lemma \ref{Somega}.
(Cf. Definition \ref{Def:MixedPhaseShiftLebSpaces} for the definition of
phase split basis.)

\par

\begin{thm}\label{Thm:OpCont3}
Let $A\in \GL (d,\mathbf R)$, $s\ge 1$, $\omega ,\omega _0 
\in \mascP _{E,s}^0(\rr {2d})$, \mbox{$\mabfp \in (0,\infty]^{2d}$},
$E$ be a phase split basis of $\rr {2d}$, and let
$a\in \Gamma ^{(\omega _0)}_{s}(\rr {2d})$. Then
$\op _A(a)$ is continuous from $M(L^{\mabfp ,E}(\rr {2d}),
\omega _0\omega )$ to $M(L^{\mabfp ,E}(\rr {2d}),\omega )$.

\par

The same holds true with $\mascP _{E,s}$ and $\Gamma ^{(\omega _0)}_{0,s}$, or with
$\mascP$ and $S^{(\omega _0)}$ in place of
$\mascP _{E,s}^0$ and $\Gamma ^{(\omega _0)}_{s}$, respectively, at each
occurence.
\end{thm}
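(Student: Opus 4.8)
The plan is to reduce Theorem~\ref{Thm:OpCont3} to the already-established Proposition~\ref{Prop:OpCont} by passing through the modulation-space characterisation of the Gevrey symbol classes given in Proposition~\ref{Prop:GammaModIdent}. First I would note that by Lemma~\ref{Somega} and \eqref{pseudorelation} it suffices to treat the Kohn--Nirenberg case $A=0$, since changing $A$ only changes the symbol within the same class $\Gamma^{(\omega_0)}_{s}$ (respectively $\Gamma^{(\omega_0)}_{0,s}$ or $S^{(\omega_0)}$), and the target and source spaces $M(L^{\mabfp,E},\omega_0\omega)$ and $M(L^{\mabfp,E},\omega)$ do not involve $A$.

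Next I would verify the pointwise weight inequality displayed just before the theorem, namely that for $\omega,\omega_0\in\mascP_{E,s}^0(\rr{2d})$ one has
\begin{equation*}
\frac{\omega(x,\xi)}{\omega(y,\eta)\,\omega_0(y,\eta)}
\lesssim
\frac{e^{r(|\xi-\eta|^{\frac1s}+|y-x|^{\frac1s})}}{\omega_0(x,\eta)}
\end{equation*}
for every $r>0$ (and for some $r>0$ in the Roumieu/$\mascP_{E,s}$ case, and with polynomial factors in the $\mascP$/$S^{(\omega_0)}$ case). This is a routine application of \eqref{Eq:ModWeightProp}: split $\omega(x,\xi)/\omega(y,\eta)$ and $\omega_0(x,\eta)/\omega_0(y,\eta)$ using moderateness in the two slots separately, absorbing the shifts $x-y$ and $\xi-\eta$ into the exponential factors. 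With $\omega_2=\omega$, $\omega_1=\omega_0\omega$, this is exactly condition \eqref{Eq:WeightfracCond1} of Proposition~\ref{Prop:OpCont} with $\omega_0$ there taken to be $(x,\eta,\zeta,w)\mapsto e^{r(|\zeta|^{1/s}+|w|^{1/s})}/\omega_0(x,\eta)$.

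It then remains to check that $a\in\Gamma^{(\omega_0)}_{s}(\rr{2d})$ implies $a\in M^{\infty,1}_{(\Omega_0)}(\rr{2d})$ where $\Omega_0$ is the weight just identified on $\rr{2d}\oplus\rr{2d}$. Here I would apply Proposition~\ref{Prop:GammaModIdent} (with $d$ replaced by $2d$, $q=1$): it gives $\Gamma^{(\omega_0)}_{s}=\bigcup_{r>0}M^{\infty,1}_{(1/\omega_r)}$ with $\omega_r(x,\xi)=\omega_0(x)e^{-r|\xi|^{1/s}}$, so $a$ lies in $M^{\infty,1}$ against a weight that decays like $e^{-r|\cdot|^{1/s}}$ in the frequency variable and grows like $\omega_0$ in the space variable. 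Comparing this with $\Omega_0$ — which has $1/\omega_0(x,\eta)$ together with $e^{r(|\zeta|^{1/s}+|w|^{1/s})}$ — and using that $1/\omega_0(x,\eta)\lesssim e^{c(|x|+|\eta|)}$ is dominated (for suitable $r$) by the relevant exponential factors, one gets the required membership; in the Beurling case $\mascP_{E,s}^0$ one has the ``for every $r$'' version on the symbol side, which matches the ``for every $r$'' weight inequality. Finally, invoking Proposition~\ref{Prop:OpCont} yields continuity of $\op_0(a)$ from $M(L^{\mabfp,E},\omega_0\omega)$ to $M(L^{\mabfp,E},\omega)$, and the three parallel statements follow by the same argument with $\mascP_{E,s}$/$\Gamma^{(\omega_0)}_{0,s}$ and $\mascP$/$S^{(\omega_0)}$ respectively. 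The only mildly delicate point is bookkeeping the quantifiers on $r$ so that the weight estimate, the symbol characterisation, and Proposition~\ref{Prop:OpCont} are all used with compatible constants; no genuine analytic obstacle arises.
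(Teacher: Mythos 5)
Your proposal is correct and follows essentially the same route as the paper: reduce to $A=0$ via Lemma \ref{Somega}, verify the displayed weight estimate so that \eqref{Eq:WeightfracCond1} holds with $\Omega _0(x,\eta ,\zeta ,w)=e^{r(|\zeta |^{1/s}+|w|^{1/s})}/\omega _0(x,\eta )$, use Proposition \ref{Prop:GammaModIdent} (with $d$ replaced by $2d$, $q=1$) to place $a$ in $M^{\infty ,1}_{(\Omega _0)}$, and conclude with Proposition \ref{Prop:OpCont}. Only a small slip in your bookkeeping remark: in the $\mascP _{E,s}^0$/$\Gamma ^{(\omega _0)}_{s}$ case the symbol characterisation gives membership for \emph{some} $r$ while the weight inequality holds for \emph{every} $r$ (and conversely in the $\mascP _{E,s}$/$\Gamma ^{(\omega _0)}_{0,s}$ case), so the quantifiers are complementary rather than both universal, and the matching still goes through exactly as you indicate.
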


\par

\begin{cor}\label{Cor:OpCont3}
Let $A\in \GL (d,\mathbf R)$, $s\ge 1$ and $\omega ,\omega _0 
\in \mascP _{E,s}^0(\rr {2d})$, $p,q \in (0,\infty]$, and let
$a\in \Gamma ^{(\omega _0)}_{s}(\rr {2d})$. Then
$\op _A(a)$ is continuous from $M^{p,q}_{(\omega _0\omega )}(\rr d)$ to
$M^{p,q}_{(\omega )}(\rr d)$.

\par

The same holds true with $\mascP _{E,s}$ and $\Gamma ^{(\omega _0)}_{0,s}$, or with
$\mascP$ and $S^{(\omega _0)}$ in place of
$\mascP _{E,s}^0$ and $\Gamma ^{(\omega _0)}_{s}$, respectively, at each
occurence.
\end{cor}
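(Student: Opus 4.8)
The plan is to derive Corollary~\ref{Cor:OpCont3} directly from Theorem~\ref{Thm:OpCont3} by specialising the phase split basis and the mixed Lebesgue exponent. First I would observe that the classical modulation space $M^{p,q}_{(\omega )}(\rr d)$ coincides with $M(L^{\mabfp ,E}(\rr {2d}),\omega )$ when $E$ is the standard (orthonormal) ordered basis of $\rr {2d}=\rr d\times \rr d$, which is evidently phase split in the sense of Definition~\ref{Def:MixedPhaseShiftLebSpaces} (take $E_0$ to be the first $d$ basis vectors, spanning $\rr d\times \{0\}$, and $E\setminus E_0$ the last $d$, spanning $\{0\}\times \rr d$), and $\mabfp =(p,\dots ,p,q,\dots ,q)$ with the first $d$ entries equal to $p$ and the last $d$ equal to $q$. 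With this choice the definition of $\nm \cdo{L^{\mabfp}_{E,(\omega )}}$ in Definition~\ref{Def:MixedLebSpaces} reduces, after integrating successively in $x_1,\dots ,x_d$ and then in $\xi _1,\dots ,\xi _d$, to the iterated $L^p$-in-$x$, $L^q$-in-$\xi$ norm, which is exactly $\nm \cdo{L^{p,q}_1}$ from Example~\ref{Lpqbfspaces}, hence to $\nm \cdo{M^{p,q}_{(\omega )}}$ via Definition~\ref{bfspaces2}.

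Next I would simply invoke Theorem~\ref{Thm:OpCont3} with this $E$ and $\mabfp$: it gives that $\op _A(a)$ is continuous from $M(L^{\mabfp ,E}(\rr {2d}),\omega _0\omega )$ to $M(L^{\mabfp ,E}(\rr {2d}),\omega )$ whenever $A\in \GL (d,\mathbf R)$, $s\ge 1$, $\omega ,\omega _0\in \mascP _{E,s}^0(\rr {2d})$ and $a\in \Gamma ^{(\omega _0)}_{s}(\rr {2d})$. Translating the two modulation spaces by the identification of the previous paragraph yields continuity of $\op _A(a)$ from $M^{p,q}_{(\omega _0\omega )}(\rr d)$ to $M^{p,q}_{(\omega )}(\rr d)$, which is the first assertion. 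For the remaining assertions, I would repeat the argument verbatim with $\mascP _{E,s}$ in place of $\mascP _{E,s}^0$ and $\Gamma ^{(\omega _0)}_{0,s}$ in place of $\Gamma ^{(\omega _0)}_{s}$, and then once more with $\mascP$ and $S^{(\omega _0)}$; in each case Theorem~\ref{Thm:OpCont3} already furnishes the corresponding variant, so nothing new is needed.

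There is essentially no obstacle here: the statement is a pure corollary, and the only thing that requires a line of justification is the bookkeeping that the standard basis of $\rr {2d}$ is phase split and that the associated $E$-split Lebesgue space with exponent vector $(p,\dots ,p,q,\dots ,q)$ is isometric (or at least isomorphic) to $L^{p,q}_1(\rr {2d})$, so that $M(L^{\mabfp ,E},\omega )=M^{p,q}_{(\omega )}$. The mildly delicate point, if any, is making sure the order of the $L^p$ and $L^q$ integrations in Definition~\ref{Def:MixedLebSpaces} matches the order in \eqref{modnorm} (innermost integration in $x$, outermost in $\xi$); this is just a matter of listing the basis vectors of $E$ in the right order, i.e.\ the $x$-directions first. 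One could alternatively cite Proposition~\ref{p1.4BF} (with $v_0=1$) to absorb any discrepancy between the chosen window in Definition~\ref{bfspaces2} and the window $\phi \in \Sigma _1(\rr d)\setminus 0$ used in \eqref{modnorm}, so that the two norms are genuinely equivalent and not merely formally identical. With that identification in hand the corollary is immediate.
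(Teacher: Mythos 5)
Your proposal is correct and is essentially the paper's own route: the corollary is stated there without proof precisely because it is the specialisation of Theorem~\ref{Thm:OpCont3} to the standard phase split basis of $\rr d\times\rr d$ with exponent vector $(p,\dots ,p,q,\dots ,q)$, under the identification $M(L^{\mabfp ,E},\omega )=M^{p,q}_{(\omega )}$ that you spell out. Your bookkeeping of the integration order and the window-independence via Proposition~\ref{p1.4BF} is exactly the (implicit) content of the paper's reduction, so nothing further is needed.
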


\par

\section{Examples}\label{sec3}

\par

In this section we list some examples and show how the continuity results
of the pseudo-differential operators in the previous section leads to continuity
on certain Sobolev spaces, weighted Lebesgue spaces and on
$\Gamma ^{(\omega )}_{0,s}$ spaces themselves.

\par

In the examples here we consider pseudo-differential operators with symbols in
$\Gamma ^{(\omega )}_{0,s}$ spaces. By some modifications, we may
also deduce similar continuity results for operators with symbols in
$\Gamma ^{(\omega )}_{s}$ spaces.

\par

\begin{example}\label{Example:SobSp}
Let $s\ge 1$, $r,r_0\in \mathbf R$, $A\in \GL (d,\mathbf R)$, and let $H_r^2(\rr d)$
be the Sobolev space of all $f\in \Sigma _1'(\rr d)$ such that
$\widehat f\in L^2_{loc}(\rr d)$ and
$$
\nm f{H_r^2}\equiv
\left (
\int _{\rr d}|\widehat f(\xi )e^{r|\xi |^{\frac 1s}}|^2\, d\xi 
\right )^{\frac 12}
$$
is finite. If $a\in C^\infty (\rr {2d})$ satisfies
\begin{equation}\label{Eq:SymbGammaSpec1}
|\partial ^\alpha a(x,\xi )|\lesssim h^{|\alpha |}\alpha !^se^{r_0|\xi |^{\frac 1s}}
\end{equation}
for every $h>0$, then $\op _A(a)$ is continuous from $H^2_r(\rr d)$ to
$H^2_{r-r_0}(\rr d)$.

\par

In fact, if $\omega _r(x,\xi )= e^{r|\xi |^{\frac 1s}}\in \mascP _{E,s}(\rr {2d})$,
then it follows that the condition \eqref{Eq:SymbGammaSpec1} holds true for every $h>0$
is the same as $a\in \Gamma ^{(\omega _{r_0})}_{0,s}(\rr {2d})$.
By a straight-forward applications of Fourier's
inversion formula we also have $H_r^2(\rr d)=M^{2,2}_{(\omega _r)}(\rr d)$
(cf. the proof of \cite[Proposition 11.3.1]{Gc2}). The assertion
now follows from these observations and letting $\mascB = L^2(\rr {2d})$
in Theorem \ref{p3.2B}.
\end{example}

\par

\begin{example}\label{Example:LebSp}
Let $s\ge 1$, $r,r_0\in \mathbf R$, $A\in \GL (d,\mathbf R)$, and let $L_r^2(\rr d)$
be the set $L^2_{r}(\rr d)$ which consists of all $f\in L^2_{loc}(\rr d)$ such that
$$
\nm f{L_r^2}\equiv
\left (
\int _{\rr d}|f(x)e^{r|x|^{\frac 1s}}|^2\, dx 
\right )^{\frac 12}
$$
is finite. If $a\in C^\infty (\rr {2d})$ satisfies
\begin{equation*} 
|\partial ^\alpha a(x,\xi )|\lesssim h^{|\alpha |}\alpha !^se^{r_0|x|^{\frac 1s}}
\end{equation*}
for every $h>0$, then $\op _A(a)$ is continuous from $L^2_r(\rr d)$ to
$L^2_{r-r_0}(\rr d)$.

\par

In fact, if $\omega _r(x,\xi )= e^{r|x|^{\frac 1s}}\in \mascP _{E,s}(\rr {2d})$,
then it follows that the conditions $a$ is the same as
$a\in \Gamma ^{(\omega _{r_0})}_{0,s}(\rr {2d})$,
and that $L_r^2(\rr d)=M^{2,2}_{(\omega _r)}(\rr d)$
(cf. the proof of \cite[Proposition 11.3.1]{Gc2}).
The assertion now follows from these observations and Theorem
\ref{p3.2B}.
\end{example}

\par

\begin{example}\label{Example:GammaSp}
Let $s\ge 1$, $\vartheta \in \mascP _{E,s}(\rr {2d})$, $\vartheta _0=\vartheta (\cdo ,0)
\in \mascP _{E,s}(\rr {d})$, $\omega _r$ and $\omega _0$
be the same as in Proposition \ref{Prop:GammaModIdent},
$A\in \GL (d,\mathbf R)$, and let $a\in \Gamma ^{(\vartheta )}_{0,s}(\rr {2d})$.
Then $\op _A(a)$ is continuous from $\Gamma ^{(\omega _0)}_{0,s}(\rr d)$ to
$\Gamma ^{(\omega _0\vartheta _0)}_{0,s}(\rr d)$.

\par

In fact, by Theorem \ref{p3.2B} it follows that
\begin{equation}\label{Eq:ContMinfty1}
\op _A(a)\, :\, M^{\infty ,1}_{(1/\omega _r)}(\rr d)
\to
M^{\infty ,1}_{(1/(\omega _r\vartheta ))}(\rr d)
\end{equation}
is continuous.
Since $M^{\infty ,1}_{(1/\omega _r)}(\rr d)$ is decreasing with respect to
$r$ and that
$$
\omega _{r-r_0}\vartheta _0 \lesssim \omega _r\vartheta \lesssim
\omega _{r+r_0}\vartheta _0,
$$
for some fixed $r_0\ge 0$, Proposition \ref{Prop:GammaModIdent} shows that
$$
\bigcap _{r>0} M^{\infty ,1}_{(1/(\omega _r))}(\rr d) =
\Gamma ^{(\omega _0)}_{0,s}(\rr d)
\quad \text{and}\quad
\bigcap _{r>0} M^{\infty ,1}_{(1/(\omega _r\vartheta ))}(\rr d) =
\Gamma ^{(\omega _0\vartheta _0)}_{0,s}(\rr d).
$$
The asserted continuity now follows from these intersections and
\eqref{Eq:ContMinfty1}.
\end{example}

\par

\end{document}